\journal{Journal of \LaTeX\ Templates}
\theoremstyle{plain}
\newtheorem{thm}{{\sc Theorem}}[section]
\newtheorem{lem}[thm]{\sc Lemma}
\newtheorem{cor}[thm]{\sc Corollary}
\newtheorem{obs}[thm]{\sc Observation}
\newtheorem{ex}[thm]{\sc Example}
\newtheorem{defn}[thm]{\sc Definition}
\newcounter{num}
\renewcommand{\bar}{\overline}
\begin{document}

\begin{frontmatter}

\title{Pascal Eigenspaces and Invariant Sequences of the First or Second Kind\tnoteref{mytitlenote}}
\tnotetext[mytitlenote]{Research supported by Daegu University Research Grant 2013}

%% Group authors per affiliation:

\author[daegu]{Ik-Pyo Kim\corref{mycorrespondingauthor}}
\ead{kimikpyo@daegu.ac.kr}
\cortext[mycorrespondingauthor]{Corresponding author}
\address[daegu]{Department of Mathematics Education,
     Daegu University, Gyeongbuk, 38453, Republic of Korea}

%% or include affiliations in footnotes:

\author[washington]{Michael J. Tsatsomeros}
\address[washington]{ Department of Mathematics and Statistics,
     Washington State University, Pullman, WA 99164, USA}
\ead{tsat@math.wsu.edu}

\begin{abstract}
An infinite real sequence $\{a_n\}$ is called an invariant sequence of the first
(resp., second) kind if $a_n=\sum_{k=0}^n {n \choose k} (-1)^k a_k$
(resp., $a_n=\sum_{k=n}^{\infty} {k \choose n} (-1)^k a_k$). We review and investigate
invariant sequences of the first and second kinds, and study their relationships
using similarities of Pascal-type matrices and their eigenspaces.
\end{abstract}
\begin{keyword}
Invariant sequence \sep Pascal matrix \sep Eigenvalue \sep Eigenvector
\MSC[2010] 15B18 \sep  11B39 \sep 11B65
\end{keyword}

\end{frontmatter}

\linenumbers

\section{Introduction}
\par
\setcounter{num}{1}
\setcounter{equation}{0}
Inverse relations play an important role in combinatorics \cite{Riordan}.
The {\em binomial inversion formula}, which states that for sequences $\{a_n\}$ and $\{b_n\}$ ($n=0,1, 2, \ldots$),
\begin{equation}
\label{b1}
a_n=\sum_{k=0}^{n} {n \choose k}(-1)^k b_k
\;\;\;\mbox{\rm if and only if}\;\;\;
b_n=\sum_{k=0}^{n} {n \choose k}(-1)^k a_k,
\end{equation}
is a typical inverse relation of interest in \cite{Choi,Donaghey,Prodinger,Sun,Wang,Wojcik}. Specifically,
(\ref{b1}) motivated Sun \cite{Sun} to investigate the following sequences.
\begin{defn}
{\rm
Let $\{a_n\}$ ($n=0,1, 2, \ldots$) be a sequence such that
\begin{equation}
\label{first}
(-1)^{s-1} \, a_n=\sum_{k=0}^{n} {n \choose k}(-1)^k a_k; \;\;s=1 \;\;\mbox{or}\;\; s=2.
\end{equation}
We refer to $\{a_n\}$ as an {\em invariant sequence} (when $s=1$) or an
{\em inverse invariant sequence} (when $s=2$) {\em of the first kind}.
}
\end{defn}
Several examples of invariant sequences of the first kind can be found in \cite{Sun}, including
$$ \{\frac{1}{2^n}\}, \; \{ nF_{n-1}\},
\; \{L_n\},\; \{(-1)^nB_n\} \;\; (n\geq 0),
$$
where $F_{-1}=0$ and $\{F_n\}$, $\{L_n\}$, and $\{B_n\}$ are the Fibonacci sequence, Lucas sequence,
and Bernoulli numbers \cite{Comtet}, respectively.
In this paper, we will establish (see Lemma \ref{bf2proof}) the {\em modified binomial inversion formula} such that
\begin{equation}
\label{b2}
a_n=\sum_{k=n}^{\infty} {k \choose n}(-1)^k b_k
       \;\;\;\mbox{\rm if and only if}\;\;\;
   b_n=\sum_{k=n}^{\infty} {k \choose n}(-1)^k a_k.
\end{equation}
Motivated by (\ref{b2}), we will introduce and consider the following sequences.
\begin{defn}
{\rm
Let $\{a_n\}$ ($n=0, 1, 2, \ldots$) be a sequence such that
\begin{equation}
\label{second}
(-1)^{s-1} a_n = \sum_{k=n}^{\infty} {k \choose n}(-1)^k a_k; \;\; s=1 \;\;\mbox{or}\;\; s=2.
\end{equation}
We refer to $\{a_n\}$ as an {\em invariant sequence} (when $s=1$) or an {\em inverse invariant sequence}
(when $s=2$) {\em of the second kind}.
}
\end{defn}
Naturally arising are the questions of existence, identification, and construction of (inverse) invariant sequences
of the second kind, as well as the problem of characterizing such sequences and examining their
relationship to their counterparts of the first kind.
Invariant sequences, which are also called self-inverse sequences in \cite{Wang}, have indeed been studied by several authors
\cite {Choi,Donaghey,Sun,Wang}. They are naturally connected to involutory (also known as involution or self-invertible) matrices
\cite{Horn} and to Riordan involutions \cite{Cheon}. Involutory matrices find use in numerical methods for
differential equations \cite{Andrew, Horn}.
%, acting as an intermediary between them and centrosymmetric, skew-centrosymetric matrices \cite{Andrew,Horn}.
They are also useful in cryptography, information theory, and computer security by providing convenient encryption and decryption methods \cite{Acharya}.
Motivated by Shapiro's open questions \cite{Shapiroq}, Riordan involutions have been intensely investigated as a combinatorial
concept \cite{Cameron,Cheon}.
In this paper, we investigate invariant sequences by means of the eigenspaces of $PD$ and $P^TD$, where $P$ is the Pascal matrix and $D$
an infinite diagonal matrix with alternating diagonal entries in $\{1, \,-1\}$ (see Sections 2, 3). In fact, $PD$ and $P^TD$ are
involutory matrices and $PD$ is a Riordan involution. %as $PD=(PD)^{-1}$
Our investigation follows the  ideas and connections of invariant sequences to the eigenspaces of $PD$ and $P^TD$
developed in Choi et al. \cite{Choi}. This will allow us to associate (inverse) invariant sequences of the first
and second kinds, as well as identify and construct such sequences (Section 4).
\section{Notation and preliminaries}
\par
\setcounter{num}{2}
\setcounter{equation}{0}
The following notation and conventions are used throughout the manuscript.
\begin{itemize}
\item
The infinite matrices in this paper have infinite numbers of rows $i$ and columns
$j$, with $i,j\in\{0,1, 2, \ldots\}$.
\item
$\mathbf{E}_\lambda (A)$ denotes the eigenspace of a (finite or infinite)
matrix $A$ corresponding to its eigenvalue $\lambda$.
\item
For a matrix $A$ with columns $A_j$ ($j=0,1, 2, \ldots$) and with
${\mathbf 0}_j$ denoting the vector of zeros in $\mathbb{R}^j$,
${A\hspace{0.01cm}^\downarrow}$ denotes the matrix whose $j$th column
is
${\displaystyle
\left[\begin{matrix}
\mathbf 0_j\\
A_j
\end{matrix} \right]
}$ where $\mathbf 0_0$ is vacuous.
\item
For a matrix $A$, its (possibly infinite) row and column index sets are $J$ and $K$,
respectively. For $J_0 \subseteq J, K_0 \subseteq K$, let
$A(J_0|K_0)$ denote the matrix obtained from $A$ by deleting rows in $J_0$ and
columns in $K_0$, and let $A[J_0|K_0]$ denote the matrix
$A(\bar{J_0}|\bar{K_0})$, where $\bar{J_0}=J\setminus J_0, \bar{K_0}=K\setminus K_0$.
For brevity, write $A(\cdot|K_0)$ and $A(J_0|\cdot)$ in place of $A(\emptyset|K_0)$
and $A(J_0|\emptyset)$, respectively.
Further, for $m,n\in\{0,1, 2, \ldots\}$, we let $A_{m,n}=A[\{0,1, 2, \ldots, m\}|\{0,1, 2, \ldots, n\}]$;
$A_{m,m}$ is abbreviated by $A_m$.
\item
The binomial coefficient (``$i$ choose $j$") is denoted by $i\choose j$
with the convention that it equals $0$  when $i<j$ or $j<0$.
\item
$P=\left[\begin{matrix}
         {i}\choose{j}
         \end{matrix} \right]$
$(i,j=0, 1, 2, \ldots)$ denotes the (infinite) {\em Pascal matrix}.
\item
$D={\rm diag}(1, -1, 1, -1, \ldots)$.
\item
Infinite real sequences $\{x_n\}$ are identified with the infinite
dimensional real vector space $\mathbb{R}^\infty$ consisting
of column vectors $\mathbf{x}=[x_0, x_1, x_2, \ldots ]^T$.
\end{itemize}

Notice that as a consequence of the binomial inversion formula (\ref{b1}), we have that
$$ P^{-1}=DPD=
\left[ (-1)^{i-j}
\begin{matrix}
 {i}\choose{j}
\end{matrix} \right] \;\;
(i,j=0, 1, 2, \ldots).$$
Thus $(PD)^{-1}=PD$ and (\ref{b1}) can be converted \cite{Choi} into a vector equation for
$\mathbf{x}=[a_0,a_1, a_2, \ldots]^T$ and $\mathbf{y}=[b_0,b_1, b_2, \ldots]^T \in \mathbb{R}^\infty$,
as follows:
\begin{equation}
\label{vbf}
PD\mathbf{x}=\mathbf{y}
\;\;\; \mbox{\rm if and only if} \;\;\;
PD\mathbf{y}=\mathbf{x}.
\end{equation}
\begin{lem}
\label{bf2proof}
Let $P$ and $D$ be the Pascal matrix and the diagonal matrix defined above,
and let $\mathbf{x}, \mathbf{y} \in \mathbb{R}^\infty$. Then
\begin{equation}
\label{vbf2}
P^TD\mathbf{x}=\mathbf{y}
\;\;\;\mbox{\rm if and only if}\;\;\;
P^TD\mathbf{y}=\mathbf{x},
\end{equation}
and the modified binomial inversion formula {\rm (\ref{b2})} holds.
\end{lem}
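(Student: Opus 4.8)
The plan is to show that, exactly like $PD$, the matrix $P^TD$ is involutory, and then to read off both (\ref{vbf2}) and the series identity (\ref{b2}) as consequences. First I would observe that the two displayed conclusions are essentially one statement: the $(n,k)$ entry of $P^TD$ is ${k \choose n}(-1)^k$, so the $n$-th component of $P^TD\mathbf{x}$ is precisely $\sum_{k=n}^{\infty} {k \choose n}(-1)^k a_k$ (writing $\mathbf{x}=[a_0,a_1,\ldots]^T$). Hence the vector equivalence (\ref{vbf2}) is literally the entrywise modified inversion formula (\ref{b2}), and it suffices to establish (\ref{vbf2}).

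For (\ref{vbf2}) I would reduce everything to the single identity $(P^TD)^2=I$; granting this, if $P^TD\mathbf{x}=\mathbf{y}$ then applying $P^TD$ to both sides gives $\mathbf{x}=(P^TD)^2\mathbf{x}=P^TD\mathbf{y}$, and the converse is symmetric. To obtain $(P^TD)^2=I$ I would start from the fact recorded just before the lemma, namely $P^{-1}=DPD$ (equivalently $(PD)^2=I$), itself a consequence of the binomial inversion formula (\ref{b1}). Transposing and using $D^T=D$ yields $(P^T)^{-1}=DP^TD$. Then, since $D^2=I$,
\[
(P^TD)^2=P^T\,(DP^TD)=P^T(P^T)^{-1}=I,
\]
which is the desired involution.

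The hard part will not be this algebra but the analytic bookkeeping hidden in it. Unlike the lower-triangular $PD$, whose action on $\mathbf{x}$ involves only finite sums $\sum_{k=0}^{n}$, the matrix $P^T$ is upper triangular with infinitely many nonzero entries in each row, so every component of $P^TD\mathbf{x}$ is a genuine infinite series. Thus the manipulations above must be understood over the class of sequences for which these series converge and for which the infinite products associate; in particular the step $(P^TD)(P^TD\mathbf{x})=(P^TD)^2\mathbf{x}$ requires justifying an interchange in the order of summation. I expect this to be the main obstacle, and I would address it by an explicit verification: substituting $b_n=\sum_{k\ge n}{k \choose n}(-1)^k a_k$ into $\sum_{n\ge m}{n \choose m}(-1)^n b_n$, interchanging the two sums, and collapsing the inner sum with the subset identity ${k \choose n}{n \choose m}={k \choose m}{k-m \choose n-m}$ followed by $\sum_{j=0}^{N}{N \choose j}(-1)^j=0$ for $N>0$. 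Only the $k=m$ term survives and returns $a_m$, which both confirms $(P^TD)^2=I$ entrywise and pins down exactly the summability hypotheses under which the reordering is legitimate.
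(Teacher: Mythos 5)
Your proposal is correct and takes essentially the same route as the paper: the paper also transposes $P^{-1}=DPD$ to get $(P^TD)^{-1}=D(P^T)^{-1}=P^TD$, reads the equivalence (\ref{vbf2}) off this involution, and then obtains (\ref{b2}) by writing the vector identity entrywise. The summability issue you flag is real but is passed over silently in the paper, which argues purely formally; note that the matrix identity $(P^TD)^2=I$ itself is unproblematic, since entries of products of upper triangular infinite matrices are finite sums, and the delicate point is only the associativity $(P^TD)\bigl((P^TD)\mathbf{x}\bigr)=\bigl((P^TD)^2\bigr)\mathbf{x}$ for vectors $\mathbf{x}$ whose images involve genuinely infinite series.
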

\begin{proof}
As $P^{-1}=DPD$, we have $(P^TD)^{-1}=D(P^T)^{-1}=P^TD$. As a consequence, (\ref{vbf2}) holds.
Letting $\mathbf{x}=[a_0, a_1, a_2, \ldots]^T$ and $\mathbf{y}=[b_0, b_1, b_2, \ldots]^T$ implies (\ref{b2}).
\end{proof}
Let $\mathbf{F}=[F_0, F_1, F_2, \ldots ]^T$ and $\mathbf{L}=[L_0, L_1, L_2, \ldots]^T$
denote the vectors in $\mathbb{R}^{\infty}$ whose entries are the members of the Fibonacci and Lucas
sequences, respectively; that is
\[ F_0=0, \; F_1=1, \; F_n=F_{n-1}+F_{n-2} \;\; (n\geq 2), \]
\[ L_0=2, \; L_1=1, \; L_n=L_{n-1}+L_{n-2} \;\; (n\geq 2). \]
The generating functions of $\mathbf{F}$ and $\mathbf{L}$
are $h_1(x)={x \over {1-x-x^2}}$ and $h_2(x)={{2-x}\over {1-x-x^2}}$, respectively \cite{Brualdi, Comtet}.

The following fact is known, however we include a proof for completeness.
%Since it is central to our considerations and since a proof is hard
%to locate in the literature, we provide one below.
\begin{lem}
	\label{wellknown}
	$PD\,\mathbf{F} = - \mathbf{F} \;\;\mbox{and}\;\; PD\,\mathbf{L} = \mathbf{L}.$
\end{lem}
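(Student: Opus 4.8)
The plan is to verify both identities entrywise. Since the $(i,j)$ entry of $PD$ is $\binom{i}{j}(-1)^j$ and the binomial coefficient vanishes for $j>i$, the $n$th component of $PD\mathbf{F}$ is the \emph{finite} sum $\sum_{k=0}^{n}\binom{n}{k}(-1)^k F_k$, and likewise for $\mathbf{L}$. Thus the two claims $PD\mathbf{F}=-\mathbf{F}$ and $PD\mathbf{L}=\mathbf{L}$ are equivalent to the scalar identities
\[
\sum_{k=0}^{n}\binom{n}{k}(-1)^k F_k = -F_n, \qquad \sum_{k=0}^{n}\binom{n}{k}(-1)^k L_k = L_n \qquad (n\ge 0),
\]
so everything reduces to evaluating two alternating binomial sums.

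First I would replace $F_k$ and $L_k$ by their Binet closed forms $F_k=(\phi^k-\psi^k)/\sqrt{5}$ and $L_k=\phi^k+\psi^k$, where $\phi=(1+\sqrt5)/2$ and $\psi=(1-\sqrt5)/2$ are the roots of $t^2=t+1$. Because each sum is finite I may split it freely and apply the binomial theorem in the form $\sum_{k=0}^{n}\binom{n}{k}(-1)^k r^k=(1-r)^n$ to each geometric term, obtaining $\sum_{k=0}^{n}\binom{n}{k}(-1)^k\phi^k=(1-\phi)^n$ and $\sum_{k=0}^{n}\binom{n}{k}(-1)^k\psi^k=(1-\psi)^n$. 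The decisive observation is that $\phi+\psi=1$, so $1-\phi=\psi$ and $1-\psi=\phi$; hence these right-hand sides are $\psi^n$ and $\phi^n$. Substituting back gives $\sum_{k}\binom{n}{k}(-1)^k F_k=(\psi^n-\phi^n)/\sqrt5=-F_n$ and $\sum_{k}\binom{n}{k}(-1)^k L_k=\psi^n+\phi^n=L_n$, which is exactly what is required.

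The only genuine content is the relation $1-\phi=\psi$ (equivalently $\phi+\psi=1$), which is immediate from Vieta's formulas for $t^2-t-1$; after that the computation is routine, and there are no convergence concerns since every sum is finite. As an alternative I would keep in reserve an argument in the generating-function language already introduced for $\mathbf{F}$ and $\mathbf{L}$: the signed binomial transform $b_n=\sum_{k}\binom{n}{k}(-1)^k a_k$ sends an ordinary generating function $A(x)$ to $\tfrac{1}{1-x}A\!\left(\tfrac{-x}{1-x}\right)$, and under $x\mapsto -x/(1-x)$ the quadratic $1-x-x^2$ reproduces itself up to the factor $(1-x)^{-2}$; feeding in $h_1$ and $h_2$ and simplifying yields $-h_1$ and $h_2$, reproving the two identities at once. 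I would present the Binet argument as the main proof, since it is the most transparent, with this generating-function derivation as a remark for the reader who prefers it.
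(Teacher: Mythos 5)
Your proof is correct, but your main argument takes a genuinely different route from the paper. The paper proves the lemma entirely with generating functions: it uses the Riordan-array fact that the $j$th column of $PD$ has generating function $g(x)f(x)^j$ with $g(x)=\tfrac{1}{1-x}$, $f(x)=\tfrac{-x}{1-x}$, so that $PD\mathbf{F}$ has generating function $g(x)h_1(f(x))$, which a short computation shows equals $-h_1(x)$; the Lucas case is analogous. That is precisely the argument you relegate to a closing remark. Your primary argument instead verifies the identities entrywise: the $n$th entry of $PD\mathbf{F}$ is the finite sum $\sum_{k=0}^{n}\binom{n}{k}(-1)^k F_k$, and Binet's formulas together with the binomial theorem and the relations $1-\phi=\psi$, $1-\psi=\phi$ evaluate it to $-F_n$ (and similarly give $L_n$ for the Lucas sum). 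Both routes are sound. Yours is more elementary and self-contained --- it needs no Riordan-group input, only Vieta and the binomial theorem, and all sums are finite so nothing formal-power-series-theoretic must be invoked. The paper's route buys generality and coherence with the rest of the article: the same mechanism $g(x)h(f(x))=\pm h(x)$ identifies eigenvectors of $PD$ for any sequence whose generating function transforms nicely under $f$, and it foreshadows the Riordan-involution viewpoint ($PD$ as the Riordan array $(g,f)$) that the paper leans on throughout. Your Binet computation, by contrast, is tailored to sequences with a closed form in the roots of $t^2-t-1$ and would not extend as directly.
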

\begin{proof}
Let $g(x)={1 \over {1-x}}$ and $f(x)={-x \over {1-x}}$.
For $j=0, 1, 2, \ldots$, the generating function of the $j$th column of $PD$ is $g(x)f(x)^j$ \cite{Shapiro}.
Thus the generating function of $PD\mathbf F$ is
\begin{equation*}
\begin{split}
[g(x), g(x)f(x), g(x)f(x)^2, \ldots][F_0, F_1, F_2, \ldots]^T&=g(x)(F_0+F_1f(x)+F_2f(x)^2+\cdots)\\
                                                             &=g(x)h_1(f(x))=-h_1(x),
\end{split}
\end{equation*}
which implies that $PD\,\mathbf{F} = - \mathbf{F}$. The proof of $PD\,\mathbf{L} = \mathbf{L}$ is similar.
\end{proof}
%\begin{proof}
%It is well-known that
%\[
%F_n=\frac{1}{\sqrt{5}}\left(\frac{1+\sqrt{5}}{2}\right)^n\;-\;\frac{1}{\sqrt{5}}\left(\frac{1-\sqrt{5}}{2}\right)^n
% \;\; (n=0,1,\ldots).
%\]
%Thus the $n$-th entry of $PD\mathbf{F}$ is
%\begin{eqnarray*}
%& & \sum_{i=0}^{n} {{n}\choose{i}}\, (-1)^i \,
%  \left[ \left(\frac{1+\sqrt{5}}{2}\right)^i \;-\; \frac{1}{\sqrt{5}}\left(\frac{1-\sqrt{5}}{2}\right)^i\right] \\
%&=& \frac{1}{\sqrt{5}} \left\{ \sum _{i=0}^{n} {{n}\choose{i}}\, (-1)^i \, \left(\frac{1+\sqrt{5}}{2}\right)^i
%	    \;-\; \sum _{i=0}^{n} {{n}\choose{i}}\, (-1)^i \, \left(\frac{1-\sqrt{5}}{2}\right)^i \right\} \\
%&=&  \frac{1}{\sqrt{5}}\left\{ \sum _{i=0}^{n} {{n}\choose{i}}\,  \left(-\,\frac{1+\sqrt{5}}{2}\right)^i
%        \;-\; \sum _{i=0}^{n} {{n}\choose{i}}\, \left(-\,\frac{1-\sqrt{5}}{2}\right)^i \right\}\\
%&=&  \frac{1}{\sqrt{5}}\,\left(-\,\frac{1+\sqrt{5}}{2}+1\right)^n \;-\;
%     \frac{1}{\sqrt{5}}\,\left(-\,\frac{1-\sqrt{5}}{2}+1\right)^n \\
%&=&  \frac{1}{\sqrt{5}}\,\left(\frac{1-\sqrt{5}}{2}\right)^n \;-\;
%     \frac{1}{\sqrt{5}}\,\left(\frac{1+\sqrt{5}}{2}\right)^n \\
%&=&  -\, F_n.
%\end{eqnarray*}
%The proof of $PD\,\mathbf{L} = \mathbf{L}$ is similar.
%\end{proof}
That is, $-1$ and $1$ are eigenvalues of $PD$ and consequently of $P^TD$.
In fact, these are the only eigenvalues of $PD$ and $P^TD$; see \cite{Choi}.
The corresponding eigenspaces are infinite dimensional. Indeed, if we
consider the Pascal-type matrices ${P\hspace{0.01cm}^\downarrow}$ and
${Q\hspace{0.01cm}^\downarrow}$ constructed via the Pascal matrix $P$ and
the matrix
\[ Q=P+\left[
       \begin{array}{cc}
1 & {\mathbf 0}^T\\
{\mathbf 0}& P
   \end{array}\right],
\]
then, as shown in \cite{Choi}, the columns of
\[
\left[\begin{matrix}
{\mathbf 0}^T\\
{P\hspace{0.01cm}^\downarrow}
\end{matrix} \right]\quad
\mbox{and} \quad
{Q\hspace{0.01cm}^\downarrow}
\]
form bases for $\mathbf{E}_{-1}(PD)$ and $\mathbf{E}_{1}(PD)$, respectively.
The following observation follows directly from the definitions and properties mentioned above.
\begin{obs}
\label{obs1}
The entries of $\mathbf{x} \in \mathbb{R}^\infty$ form
\begin{itemize}
\item
an invariant sequence of the first kind if and only if
$\mathbf{x} \in \mathbf{E}_{1} (PD)$;
\item
an inverse invariant sequence of the first kind if and only if
$\mathbf{x} \in \mathbf{E}_{-1} (PD)$;
\item
an invariant sequence of the second kind if and only if
$\mathbf{x} \in \mathbf{E}_{1} (P^TD)$;
\item
an inverse invariant sequence of the second kind if and only if
$\mathbf{x} \in \mathbf{E}_{-1} (P^TD)$.
\end{itemize}
\end{obs}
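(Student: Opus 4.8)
The plan is to verify each of the four equivalences by writing the eigenvector condition $PD\mathbf{x}=\lambda\mathbf{x}$ (respectively $P^TD\mathbf{x}=\lambda\mathbf{x}$) entrywise and matching it directly against the defining relation (\ref{first}) or (\ref{second}). Since $\mathbf{E}_\lambda(A)=\{\mathbf{x}:A\mathbf{x}=\lambda\mathbf{x}\}$ and the only relevant eigenvalues are $\lambda=\pm1$, each ``if and only if'' collapses to an identity between the $n$th entry of a matrix--vector product and a prescribed sum. Thus no real machinery is needed beyond unwinding the products componentwise.

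First I would compute the $n$th entry of $PD\mathbf{x}$. Writing $\mathbf{x}=[a_0,a_1,a_2,\ldots]^T$ and recalling $D=\mathrm{diag}(1,-1,1,-1,\ldots)$, so that its $k$th diagonal entry is $(-1)^k$, together with $P_{nk}=\binom{n}{k}$ and the convention $\binom{n}{k}=0$ for $k>n$, I obtain
\[
(PD\mathbf{x})_n=\sum_{k}\binom{n}{k}(-1)^k a_k=\sum_{k=0}^{n}\binom{n}{k}(-1)^k a_k,
\]
which is exactly the right-hand side of (\ref{first}). Hence $PD\mathbf{x}=\mathbf{x}$ is equivalent to (\ref{first}) with $s=1$ (the invariant sequence of the first kind), and $PD\mathbf{x}=-\mathbf{x}$ is equivalent to (\ref{first}) with $s=2$ (the inverse invariant case). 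This is already implicit in the conversion (\ref{vbf}); I would simply record the $n$th-entry identification explicitly.

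For the second kind I would repeat the computation with $P^T$ in place of $P$. Now $(P^T)_{nk}=P_{kn}=\binom{k}{n}$, and the convention $\binom{k}{n}=0$ for $k<n$ truncates the sum from below rather than above, giving
\[
(P^TD\mathbf{x})_n=\sum_{k}\binom{k}{n}(-1)^k a_k=\sum_{k=n}^{\infty}\binom{k}{n}(-1)^k a_k,
\]
which matches the right-hand side of (\ref{second}). Therefore $P^TD\mathbf{x}=\mathbf{x}$ and $P^TD\mathbf{x}=-\mathbf{x}$ correspond to (\ref{second}) with $s=1$ and $s=2$, respectively, settling the last two bullets.

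The only genuine subtlety, which I would flag rather than grind through, is the infinite summation appearing in $P^TD\mathbf{x}$: unlike the lower-triangular action of $PD$, each entry of $P^TD\mathbf{x}$ is an infinite series $\sum_{k\ge n}\binom{k}{n}(-1)^k a_k$, so the identification presupposes that these series are well defined for the sequences under consideration. Since Definition (\ref{second}) is itself stated in terms of this series, I would treat the equivalence as a formal identity on $\mathbb{R}^\infty$, observing that convergence is precisely the hypothesis built into the definition and into the modified inversion formula (\ref{b2}) established in Lemma \ref{bf2proof}.
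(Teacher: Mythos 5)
Your proposal is correct and is essentially the paper's own argument: the paper offers no separate proof, stating only that the observation ``follows directly from the definitions and properties mentioned above,'' and your entrywise identification of $(PD\mathbf{x})_n$ with the sum in (\ref{first}) and of $(P^TD\mathbf{x})_n$ with the sum in (\ref{second}) is precisely that unwinding, with $s=1,2$ correctly matched to $\lambda=\pm 1$. Your remark that the infinite series defining $P^TD\mathbf{x}$ is exactly the series presupposed by Definition (\ref{second}) is a reasonable way to handle the only delicate point, consistent with how the paper treats it.
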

Based on Observation \ref{obs1}, our goal is to study the
eigenspaces $\mathbf{E}_\lambda (PD)$ and $\mathbf{E}_\lambda (P^TD)$
($\lambda \in \{ 1, -1\}$) and discover their relationships. Our approach
entails showing the existence of an infinite invertible matrix $N$ such that
\begin{equation}
\label{goal1}
N(P^TD)N^{-1}=(P_1^TD_1)\bigoplus (P_1^TD_1)\bigoplus \cdots \bigoplus (P_1^TD_1)\bigoplus \cdots
\end{equation}
and
\begin{equation}
\label{goal2}
D(N^{-1})^TD\,(PD)\,DN^TD=(P_1D_1)\bigoplus (P_1D_1)\bigoplus \cdots \bigoplus (P_1D_1)\bigoplus \cdots,
\end{equation}
which are infinite direct sums of copies of $P_1^TD_1$ and $P_1D_1$, respectively. This result will be
applied to characterize $\mathbf{E}_\lambda(PD)$ and $\mathbf{E}_\lambda(P^TD)$. Extending the work in
\cite{Choi}, we will also show that the columns of $P^{T\downarrow}$ and ${Q}^{T\downarrow}(0|0)$ form
bases for $\mathbf{E}_\lambda(P^TD)$. This will indeed allow us to investigate the relationships between
invariant sequences of the first and second kinds.

%%%%%%%%%%%%%%%%%%%%%%%%%%%%%%%%%%%%%%%%%%%%%%%%%%%%%%%%%%%%%%%%
\section{The Eigenspaces of $P^TD$ and $PD$}
\par
\setcounter{num}{3}
\setcounter{equation}{0}
Let $A=[a_{ij}]$ $(i, j=0, 1, 2, \dots)$ be the matrix defined by
\[
a_{ij}=\left \{ \begin{array}{lr}
(-1)^{j-i}, &  {\rm if}~ i \leq j, \\
 0, &  {\rm if}~ i>j,
\end{array} \right.
\]
and let $J(a)$ denote the infinite Jordan block of the form
\[
    \left[ \begin{array}{ccccccc}
         ~~ a & 1   &          &         &    &  & \\
         ~~   & a   &  1       &         &   &  O& \\
              &     &  a       & 1       &    &  & \\
              &     &          &  \ddots &   \ddots &  & \\
              &    &          &         &  a & 1 &\\
              &     &   O       &         &   & a  &\ddots\\
              &     &          &         &    &    & \ddots~~
         \end{array} \right].
\]
It readily follows that $A^{-1}=J(1)$.

In the next two lemmas, we will construct an infinite matrix $N$ and its
inverse $M=N^{-1}$, which will give rise to similarity transformations of
$P^TD$ and $PD$ into direct sums as in (\ref{goal1}) and (\ref{goal2}).

\begin{lem}
\label{lemma1}
Let $m$ be a positive integer and let $N_{(m)}=H_{(m)} H_{(m-1)}\cdots H_{(1)}$,
where $H_{(k)}=I_{2k-2}\bigoplus F$ {\rm ($k=1, 2, \ldots, m$)}. Then
$$\lim_{m \to \infty}N_{(m)}=N=[n_{ij}^{\infty}]$$
is the infinite matrix defined by
$n_{00}^{\infty}=1$, $n_{0j}^{\infty}=0$ and $n_{i0}^{\infty}=0$ for $i, j=1, 2, \ldots$, and
\[
n_{ij}^{\infty}=\left \{ \begin{array}{lr}
(-1)^{j-i} {{\lfloor {{i-1}\over 2}\rfloor+j-i}\choose {\lfloor {{i-1}\over 2}\rfloor}},&  {\rm if}~~ 1\leq i \leq j, \\
 0, &  {\rm if}~~ i>j\geq 1.
\end{array} \right.
\]
%for each $(i, j)$ with $i, j=1, 2, \ldots$.
\end{lem}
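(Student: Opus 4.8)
The plan is to prove convergence first and then identify the limit by generating functions. Since $H_{(k)}=I_{2k-2}\oplus F$ is the identity in its first $2k-2$ rows, left multiplication by $H_{(k)}$ leaves rows $0,1,\dots,2k-3$ untouched; hence $N_{(m)}$ and $N_{(m-1)}$ agree in rows $0,\dots,2m-3$, so for each fixed pair $(i,j)$ the entry $(N_{(m)})_{ij}$ is eventually constant and $N=\lim_{m\to\infty}N_{(m)}$ exists entrywise, the rows $2k-2$ and $2k-1$ being frozen already in $N_{(k)}$. I would encode each row $u=(u_0,u_1,\dots)$ of a partial product by its generating function $\widehat{u}(x)=\sum_{j}u_jx^{j}$, and record from the definition of $F$ the only facts needed below: its zeroth row is $\mathbf{e}_0^{T}$ (generating function $1$), while for $p\ge 1$ its $p$th row is $\big((-1)^{q-p}\big)_{q\ge p}$, so that forming the combination $\sum_{q\ge p}(-1)^{q-p}(\,\cdot\,)$ amounts to convolving with $\sum_{s\ge0}(-x)^{s}=1/(1+x)$.

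The heart of the argument is an induction on $k$ establishing the invariant that in $N_{(k)}$ every row $i\ge 2k$ has generating function $x^{i}/(1+x)^{k}$, while the two freshly frozen rows satisfy $2k-2\mapsto x^{2k-2}/(1+x)^{k-1}$ and $2k-1\mapsto x^{2k-1}/(1+x)^{k}$. The base case $k=1$ is read directly from $N_{(1)}=F$. For the inductive step I apply $H_{(k)}$, whose active block starts at row $2k-2$; by the invariant all incoming rows $2k-2+q$ ($q\ge 0$) carry the common factor $(1+x)^{-(k-1)}$, so the local row $0$ (global row $2k-2$) is unchanged, while for local index $p\ge 1$ the alternating sum telescopes,
\[
x^{\,2k-2+p}(1+x)^{-(k-1)}\sum_{s\ge 0}(-x)^{s}=\frac{x^{\,2k-2+p}}{(1+x)^{k}},
\]
which reinstates the invariant at level $k$ and fixes the exponents of the frozen rows. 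I expect this bookkeeping to be the main obstacle: the block start advances by $2$ at each step while the denominator exponent advances by only $1$, so one must keep careful track of the re-indexing between consecutive steps and verify the ``tail uniformity'' of the exponent that lets the geometric series collapse cleanly.

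Passing to the limit, row $i$ of $N$ then has generating function $1$ for $i=0$ and $x^{i}/(1+x)^{\lceil i/2\rceil}=x^{i}/(1+x)^{\lfloor (i-1)/2\rfloor+1}$ for $i\ge 1$. To finish I would extract coefficients using the negative binomial expansion $(1+x)^{-p}=\sum_{\ell\ge 0}(-1)^{\ell}\binom{p+\ell-1}{p-1}x^{\ell}$ with $p=\lfloor (i-1)/2\rfloor+1$, giving, for $j\ge i$,
\[
[x^{j}]\,\frac{x^{i}}{(1+x)^{p}}=(-1)^{j-i}\binom{\lfloor (i-1)/2\rfloor+j-i}{\lfloor (i-1)/2\rfloor},
\]
and $0$ when $j<i$. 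Together with the vanishing of the first row and column recorded above, this matches the claimed entries $n_{ij}^{\infty}$ exactly and completes the proof.
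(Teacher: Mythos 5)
Your proposal is correct and follows essentially the same route as the paper's own proof: an induction on the number of factors $H_{(k)}$, driven by the two observations that the identity blocks freeze rows $0,\ldots,2k-3$ and that the not-yet-frozen tail rows share a uniform pattern that makes the alternating sums collapse, followed by an entrywise passage to the limit. The only difference is bookkeeping: you encode each row by the generating function $x^i/(1+x)^k$ and collapse the sums by convolving with $\sum_{s\ge 0}(-x)^s=1/(1+x)$, whereas the paper carries the explicit entry formula $(-1)^{j-i}\binom{k+j-i}{k}$ through the induction and collapses the sums with the equivalent hockey-stick identity $\sum_{l=i}^{j}\binom{m-2+j-l}{m-2}=\binom{m-1+j-i}{m-1}$.
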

\begin{proof}
Let $m$ be a positive integer and let $N_{(m)}=H_{(m)}H_{(m-1)}\cdots H_{(1)}=[n_{ij}^{m}].$
We will prove that
\[
n_{ij}^{m}=\left \{ \begin{array}{lr}
(-1)^{j-i} {{k+j-i}\choose {k}},&  {\rm if}~~ i \leq j, \\
 0, &  {\rm if}~~ i>j
\end{array} \right.
\]
by induction on $m$, where
\[
k=\left \{ \begin{array}{lr}
\lfloor {{i-1}\over 2}\rfloor,&  {\rm if}~i=1, 2, \ldots, 2m;~~ j=1, 2, \ldots, \\
 m-1, &  {\rm if}~i=2m+1, 2m+2, \ldots;~j=1, 2, \ldots
\end{array} \right.
\]
The claim is clear for $m=1$. For $m=2$, by the construction of $H_{(1)}$ and $H_{(2)}$, we have
\[
n_{ij}^{2}=\left \{ \begin{array}{lr}
(-1)^{j-i} {{k+j-i}\choose {k}}, &  {\rm if}~~ i \leq j, \\
 0, &  {\rm if}~~ i>j,
\end{array} \right.
\]
where
\[
k=\left \{ \begin{array}{lr}
\lfloor {{i-1}\over 2}\rfloor, &  {\rm if}~i=1, 2, 3, 4;~~ j=1, 2, \ldots, \\
 1, &  {\rm if}~i=5, 6, \ldots;~j=1, 2, \ldots,
\end{array} \right.
\]
since for each $i=3, 4, \ldots$ and each $j=1, 2, \ldots$,
$$
n_{ij}^{2}=\sum_{l=i}^{\infty}(-1)^{l-i}(-1)^{j-l} {{j-l}\choose{0}} =(-1)^{j-i}\sum_{l=i}^{j} {{j-l}\choose{0}}
     =(-1)^{j-i} {{1+j-i}\choose{1}}.
$$
Let now $m \geq 3$. Then by the construction of $H_{(m)}$,
we have $n_{ij}^{m}=n_{ij}^{m-1}$ for all $i=1, 2, \ldots, 2m-2$
and all $j=1, 2, \ldots$
By the induction hypothesis,
\[
n_{ij}^{m}=\left \{ \begin{array}{lr}
(-1)^{j-i} {{k+j-i}\choose {k}},&  {\rm if}~~ i \leq j, \\
 0, &  {\rm if}~~ i>j,
\end{array} \right.
\]
where
\begin{eqnarray}
k=\left \{ \begin{array}{lr}
\lfloor {{i-1}\over 2}\rfloor, &  {\rm if}~i=1, 2, \ldots, 2m;~~ j=1, 2, \ldots, \\
 m-1, &  {\rm if}~i=2m+1, 2m+2, \ldots;~j=1, 2, \ldots,\\
\end{array} \right.
\end{eqnarray}
because
\begin{equation*}
\begin{split}
 n_{ij}^{m}&=\sum_{l=i}^{\infty}(-1)^{l-i}(-1)^{j-l} {{m-2+j-l} \choose {m-2}}\\
           &=(-1)^{j-i}\sum_{l=i}^{j}{{m-2+j-l} \choose {m-2}}=(-1)^{j-i} {{m-1+j-i} \choose {m-1}}
 \end{split}
\end{equation*}
for each $i=2m-1, 2m, \ldots$ and each $j=1,2, \ldots$
Thus, by $(2.1)$, we have that $\lim_{m \to \infty}N_{(m)}=N=[n_{ij}^{\infty}]$ given by
\[
n_{ij}^{\infty}=\left \{ \begin{array}{lr}
(-1)^{j-i} {{\lfloor {{i-1}\over 2}\rfloor+j-i}\choose {\lfloor {{i-1}\over 2}\rfloor}},&  {\rm if}~~ i \leq j, \\
 0, &  {\rm if}~~ i>j
\end{array} \right.
\]
for each $i, j=1, 2, \ldots$ Clearly, we have
$n_{00}^{\infty}=1$, $n_{0j}^{\infty}=0$ and $n_{i0}^{\infty}=0$ for $i, j=1, 2, \ldots$
by the construction of $H_{(l)}$ ($l=1,2, \ldots$),
and the proof is complete.
\end{proof}

The {\em difference sequence} $\Delta \mathbf{a}=[\Delta a_0, \Delta a_1 , \Delta a_2 , \ldots]^T$
of a sequence $\mathbf{a}=[a_0, a_1 , a_2 , \ldots ]^T$ is defined by $\Delta a_i=a_{i+1}-a_i$ for each $i=0, 1, 2, \ldots$
Let $\Delta^k \mathbf{a}=[\Delta^k a_0, \Delta^k a_1 , \Delta^k a_2 , \ldots ]^T$ $(k=0, 1, 2, \ldots)$ be the
$k$th difference sequence defined inductively by
$\Delta^k \mathbf{a}=\Delta(\Delta^{k-1} \mathbf{a})$, where $\Delta^0 \mathbf{a}=\mathbf{a}$.
The infinite matrix
\[ \left[
\begin{array}{cccc}
            a_0  &  a_1  &  a_2 & \cdots~\\
           \Delta a_0 &  \Delta a_1  & \Delta a_2 & \cdots~ \\
            \Delta^2 a_0 &  \Delta^2 a_1  & \Delta^2 a_2 & \cdots~ \\
              \vdots & \vdots  & \vdots & \ddots
            \end{array}\right]
\]
is called the {\em difference matrix} of $\mathbf{a}$. It is well known \cite{Brualdi}
that for each $n=0,1,2, \ldots$,
$$ a_n= a_0 {{n}\choose{0}} + \Delta a_0 {{n}\choose{1}} + \Delta^2 a_0 {{n}\choose{2}}+
  \cdots + \Delta^n a_0 {{n}\choose{n}},$$
which is used in the proof of the following lemma.

\begin{lem}
\label{lemma2}
Let $M=[m_{ij}^{\infty}]$ be the matrix with $m_{00}^{\infty}=1$, $m_{0j}^{\infty}=0$ and $m_{i0}^{\infty}=0$ for $i, j=1, 2, \ldots$, and
\[
m_{ij}^{\infty}=\left \{ \begin{array}{lr}
{{\lfloor {j\over 2}\rfloor}\choose{j-i}}, &  {\rm if}~~ i \leq j, \\
 0, &  {\rm if}~~ i>j
\end{array} \right. (i, j=1, 2, \ldots).
\]
Then $M=N^{-1}$, where $N$ is the limit matrix in {\em Lemma \ref{lemma1}}.
\end{lem}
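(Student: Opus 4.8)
The plan is to verify directly that $NM=I$, exploiting the fact that $N$ and $M$ are both unit upper triangular with trivial zeroth row and column. Since $n_{00}^{\infty}=m_{00}^{\infty}=1$ while every other entry in row $0$ and column $0$ vanishes, each matrix splits as $1\oplus N'$ and $1\oplus M'$, where $N',M'$ are the blocks indexed by $i,j\ge 1$; consequently it suffices to prove $(NM)_{ik}=\delta_{ik}$ for $i,k\ge 1$, and the zeroth row and column take care of themselves. Moreover $n_{ij}^{\infty}=0$ unless $i\le j$ and $m_{jk}^{\infty}=0$ unless $j\le k$, so each entry of the product is a \emph{finite} sum over $i\le j\le k$ and the infinite product is well defined entrywise.

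For $1\le i\le k$, set $p=\lfloor (i-1)/2\rfloor$ and $q=\lfloor k/2\rfloor$. Substituting the explicit formulas from Lemmas \ref{lemma1} and \ref{lemma2} and reindexing by $l=j-i$, I would write
\[
(NM)_{ik}=\sum_{l=0}^{k-i}(-1)^{l}\binom{p+l}{l}\binom{q}{(k-i)-l}.
\]
Applying upper negation $(-1)^{l}\binom{p+l}{l}=\binom{-p-1}{l}$ and then the Vandermonde convolution collapses this to a single (generalized) binomial coefficient,
\[
(NM)_{ik}=\binom{q-p-1}{k-i}=\binom{\lfloor k/2\rfloor-\lfloor (i-1)/2\rfloor-1}{k-i}.
\]
When $i=k$ the right-hand side is $\binom{\cdot}{0}=1$, exactly the diagonal entry we need.

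The one genuinely delicate point — and the step I expect to be the main obstacle — is showing that this coefficient vanishes whenever $i<k$. Writing $r=k-i\ge 1$, the number $\binom{q-p-1}{r}$ equals zero precisely when one of its numerator factors $q-p-1,\,q-p-2,\dots,q-p-r$ is zero, i.e. precisely when
\[
1\le \lfloor k/2\rfloor-\lfloor (i-1)/2\rfloor\le k-i.
\]
I would establish this two-sided inequality by a short case analysis on the parities of $i$ and $k$: in each of the four cases both floors evaluate exactly, and the two bounds reduce to $r\ge 1$ or $r\ge 2$, which is forced because the parity in that case makes $r$ odd or at least $2$. This yields $(NM)_{ik}=\delta_{ik}$, hence $NM=I$ and $M=N^{-1}$.

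A second route, kept in reserve, is to invert the factorization $N=\lim_{m}H_{(m)}\cdots H_{(1)}$ of Lemma \ref{lemma1} factor by factor, using $A^{-1}=J(1)$, and to identify the limit of the reversed product of inverses with $M$. This reproduces the same binomial identity, so I expect the direct Vandermonde computation above to give the cleaner presentation.
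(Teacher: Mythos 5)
Your proposal is correct, and its overall skeleton coincides with the paper's: both arguments verify $NM=I$ entrywise, dispose of the zeroth row and column and of the entries with $i\ge k$ trivially, and reduce the case $i<k$ to the vanishing of the alternating sum $\sum_{l}(-1)^{l}\binom{p+l}{l}\binom{q}{k-i-l}$, with the same two-sided floor inequality $1\le \lfloor k/2\rfloor-\lfloor (i-1)/2\rfloor\le k-i$ doing the real work at the end. Where you genuinely differ is in how that sum is evaluated. The paper never invokes Chu--Vandermonde; instead it constructs the difference matrix of the finite alternating sequence $\mathbf z=\bigl[\binom{k+r}{k},-\binom{k+r-1}{k},\ldots,(-1)^r\binom{k}{k},0,\ldots\bigr]^T$ and identifies the sum as $(-1)^r a_{0t}$, $t=\lfloor (i+r)/2\rfloor$, an entry in the top row of that matrix; the anti-triangular block structure forces $a_{0t}=0$ exactly for $t=k+1,\ldots,k+r$, and the floor inequality places $t$ in that window. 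Your route---upper negation $(-1)^l\binom{p+l}{l}=\binom{-p-1}{l}$ followed by Vandermonde convolution---yields the closed form $\binom{q-p-1}{k-i}$ for \emph{every} entry of $NM$ at once, so the diagonal ($r=0$) and off-diagonal cases are handled uniformly; the cost is passing through binomial coefficients with negative upper argument and then justifying the vanishing of an integer-top coefficient by the four-way parity analysis, which does check out (in the two cases where the bounds demand $r\ge 2$, $r$ is even and positive, hence at least $2$). The paper's argument stays entirely within nonnegative-integer combinatorics and reuses the difference-matrix machinery introduced immediately before the lemma, at the price of a longer and less transparent construction; your identity-based presentation is shorter and more standard, and either one is a complete proof.
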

\begin{proof}
Let $n_{ij}^{\infty}$ denote the $(i, j)$ entry of $N$ ($i, j=0, 1, 2, \ldots$). We would like to show that $$\sum_{l=0}^{\infty}n_{il}^{\infty}m_{lj}^{\infty}=\delta_{ij},$$ the Kronecker delta.
For $i=0$ or $j=0$, there is nothing to show, so let $i\geq 1$ and $j\geq 1$. If $i=j$ resp. $i>j$,
then clearly
$$\sum_{l=0}^{\infty}n_{il}^{\infty}m_{lj}^{\infty}=\sum_{l=i}^{i}n_{il}^{\infty}m_{li}^{\infty}={{\lfloor {i-1\over 2}\rfloor}\choose{\lfloor {i-1\over 2}\rfloor}}{{\lfloor {i\over 2}\rfloor}\choose{0}}=1
\;\;\mbox{resp.}\;\;
\sum_{l=0}^{\infty}n_{il}^{\infty}m_{lj}^{\infty}=0.$$
So it is enough to show that if $j=i+r$ with $r > 0$, then $\sum_{l=0}^{\infty}n_{il}^{\infty}m_{lj}^{\infty}=0$.
Let $k={\lfloor {i-1\over 2}\rfloor}$ and consider the sequence
$\mathbf{z}=\left[{{k+r}\choose{k}}, - {{k+r-1} \choose {k}}, {{k+r-2}\choose{k}}, \ldots, (-1)^{r-1} {{k+1}\choose{k}}, (-1)^r {{k}\choose{k}}, 0, \ldots\right]^T$.
We can construct the difference matrix $A=[a_{ij}]$ having $\mathbf{z}$
as its first column as follows:
\[
      A=\left[
       \begin{array}{cccc|ccc|c}
            ~ {k+r}\choose{k} & {k+r-1}\choose{k-1} & \cdots & {r}\choose{0} &   &  &    &\cdots~\\
            ~- {{k+r-1}\choose{k}}&-{{k+r-2}\choose{k-1}}&\cdots & -{{r-1}\choose{0}} &   &  & &   \cdots~\\
            ~{k+r-2}\choose{k} & {k+r-3}\choose{k-1} & \cdots & {r-2}\choose{0}&   &  &  &  \cdots~\\
            ~\vdots& \vdots& \cdots & \vdots&   & B &   & \cdots~\\
~(-1)^{r-2} {{k+2}\choose{k}} & (-1)^{r-2} {{k+1}\choose{k-1}} & \cdots & (-1)^{r-2} {{2}\choose{0}} &   &  &   &\cdots~\\
            ~(-1)^{r-1} {{k+1}\choose{k}} & (-1)^{r-1} {{k}\choose{k-1}} & \cdots & (-1)^{r-1} {{1}\choose{0}} &  &  &  & \cdots~\\
            ~(-1)^r {{k}\choose{k}} & (-1)^r {{k-1}\choose{k-1}} & \cdots & (-1)^r {{0}\choose{0}}&  &  &   & \cdots~\\\cline{1-8}
                        ~0 & 0 & 0 & 0 & 0  &\cdots & 0 & \cdots~\\
             ~\vdots & \vdots & \vdots & \vdots &\vdots & \vdots &  \vdots & \ddots~
                   \end{array}
\right],
\]
where $B$ is the $(r+1)\times (r+1)$ matrix given by
\[
      B=\left[
       \begin{array}{c|c|c|c|c|c|c|c|c}
            ~    0   &  0    & 0     & 0     &  \cdots     &    0  &   0   &  0 &  *  ~\\\cline{8-8}
            ~    0   &  0    & 0     & 0     & \cdots      &    0  &    0  & *  &  *   ~\\\cline{7-7}
            ~ \vdots&\vdots&\vdots&\vdots& \iddots         &    0  &    *  &   * &  *   ~\\\cline{6-6}
            ~  0    &  0   &   0  &  0   & \iddots &   *   &    *   & *   & *    ~\\\cline{5-5}
~              0    & 0    &   0  &  0   &\iddots &   \vdots    &  \vdots     &  \vdots  & \vdots    ~\\\cline{4-4}
            ~  0    &  0   &   0  &  *   & \cdots       &   *    &   *    &  *  & *    ~\\\cline{3-3}
            ~  0    &  0   &  *   &  *   & \cdots       &   *    &   *    &  *  & *    ~\\\cline{2-2}
              ~0    &  *   &  *   &   *  &  \cdots      &   *    &   *    &  *  & *     ~\\\cline{1-1}
               *    &  *   &  *   &  *   &     \cdots   &    *   &   *    &  *  & * ~
                   \end{array}
\right].
\]
Thus $a_{0j}=0$ for $j=k+1, \ldots, k+r$. Since $k+1 \leq \lfloor {i+r\over 2}\rfloor \leq \lfloor {i-1+r+r\over 2}\rfloor =k+r$, we obtain
\begin{equation*}
\begin{split}
\sum_{l=0}^{\infty}n_{il}^{\infty}m_{lj}^{\infty}&=\sum_{l=i}^{j}n_{il}^{\infty}m_{lj}^{\infty}=\sum_{l=i}^{i+r}(-1)^{l-i} {{k+l-i}\choose{k}} {{t}\choose{i+r-l}}\\
&=\sum_{s=0}^{t}(-1)^{r+s} {{k+r-s}\choose{k}} {{t}\choose{s}}=(-1)^r a_{0t}=0,
\end{split}
\end{equation*}
where $k=\lfloor {i-1\over 2}\rfloor$ and  $t=\lfloor {i+r\over 2}\rfloor$, completing the proof.
\end{proof}
Let $M_{(t)}=U_{(1)} U_{(2)}\cdots U_{(t)}$, where $U_{(j)}=I_{2j-2}\bigoplus J(1)$ ($j=1, 2, \ldots, t$).
Then, by Lemmas \ref{lemma1} and \ref{lemma2}, we have $\lim_{t \to \infty}M_{(t)}=M$, which is the matrix in Lemma \ref{lemma2}.
In fact we have
\[
      N= \left[ \begin{array}{ccccccccc}
           ~ 1 & 0 & 0 & 0 & 0  & 0 & 0 & 0 &   \cdots~\\
            ~0 & 1 &-1 & 1 & -1  & 1 & -1 & 1 &   \cdots~\\
            ~0 & 0 & 1 & -1 & 1  & -1 & 1 & -1 &   \cdots~\\
            ~0 & 0 & 0 & 1 & -2  & 3 & -4 & 5 &   \cdots~\\
            ~0 & 0 & 0 & 0 & 1  & -2 & 3 & -4 &   \cdots~\\
            ~0 & 0 & 0 & 0 & 0  & 1 & -3 & 6 &   \cdots~\\
            ~0 & 0 & 0 & 0 & 0  & 0 & 1 & -3 &   \cdots~\\
            ~0 & 0 & 0 & 0 & 0  &0 & 0 & 1 &   \cdots~\\
             ~\vdots & \vdots & \vdots & \vdots &\vdots & \vdots & \vdots & \vdots & \ddots~
                   \end{array} \right]
\]
and
\[
      M=N^{-1} = \left[ \begin{array}{ccccccccc}
           ~ 1 & 0     &     0 &    0 & 0    & 0      & 0 &  0    &  \cdots~\\
            ~0 & 1     &     1 &    0 & 0    & 0      & 0 &  0    &  \cdots~\\
            ~0 & 0     &     1 &    1 & 1    & 0      & 0 &  0    &  \cdots~\\
            ~0 & 0     &     0 &    1 & 2    & 1      & 1 &  0    &   \cdots~\\
            ~0 & 0     &     0 &    0 & 1    & 2      & 3 &  1    & \cdots~\\
            ~0 & 0     &     0 &    0 & 0    & 1      & 3 &  3    & \cdots~\\
            ~0 & 0     &     0 &    0 & 0    & 0      & 1 &  3    &  \cdots~\\
            ~0 & 0     &    0  &  0   & 0    & 0      & 0 &  1    & \cdots~\\
         \vdots& \vdots&\vdots &\vdots&\vdots& \vdots &\vdots & \vdots& \ddots~

             \end{array} \right].
\]

We can now state and prove the similarity transformations of $PD$ and $P^TD$ claimed
in (\ref{goal1}) and (\ref{goal2}).
\begin{thm}
\label{similarity}
Let $P=\left[    \begin{matrix}
           {i}\choose{j}
         \end{matrix} \right] (i, j=0, 1, \ldots)$ and $D={\rm diag}(1, -1, 1, -1, \ldots)$. Then,
\begin{itemize}
  \item [\rm (a)] $N(P^TD)M=(P_1^TD_1)\bigoplus (P_1^TD_1)\bigoplus \cdots \bigoplus (P_1^TD_1)\bigoplus \cdots$,
  \item [\rm (b)] $(DM^TD)\, (PD)\, (DN^TD)=(P_1D_1)\bigoplus (P_1D_1)\bigoplus \cdots \bigoplus (P_1D_1)\bigoplus \cdots$,
\end{itemize}
where
$P_1$ resp. $D_1$ are the leading $2\times 2$ submatrices of $P$ resp. $D$,
and $N$ resp. $M$ are the matrices in {\rm Lemmas} $3.1$ resp. $3.2$.
\end{thm}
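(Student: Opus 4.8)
The plan is to verify the two similarity identities directly by computing matrix products, leveraging the explicit closed forms of $N$ and $M=N^{-1}$ established in Lemmas~\ref{lemma1} and~\ref{lemma2}, together with the factored descriptions $N=\lim_m N_{(m)}$ and $M=\lim_t M_{(t)}$. Since part (b) should follow from part (a) by a transpose-and-conjugation argument, I would prove (a) first and then deduce (b).

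For part (a), I would first observe that $P^TD$ and its image under conjugation are involutions (this is Lemma~\ref{bf2proof}), so it suffices to understand how $N(P^TD)M$ acts blockwise. The key structural fact I would extract from Lemma~\ref{lemma1} is that $N$ (and hence $M$) has a natural $2\times 2$ block-bidiagonal pattern: the factorizations $N_{(m)}=H_{(m)}\cdots H_{(1)}$ with $H_{(k)}=I_{2k-2}\bigoplus F$ and $M_{(t)}=U_{(1)}\cdots U_{(t)}$ with $U_{(j)}=I_{2j-2}\bigoplus J(1)$ suggest that the right-hand side $(P_1^TD_1)\bigoplus(P_1^TD_1)\bigoplus\cdots$ is assembled from $2\times 2$ diagonal blocks, each equal to
\[
P_1^TD_1=\begin{bmatrix}1&0\\1&1\end{bmatrix}\begin{bmatrix}1&0\\0&-1\end{bmatrix}=\begin{bmatrix}1&0\\1&-1\end{bmatrix}.
\]
Concretely, I would set $R=N(P^TD)M$, show $R$ is block diagonal with $2\times 2$ blocks along the diagonal (all off-diagonal blocks vanish), and then verify each diagonal block equals $P_1^TD_1$. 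Establishing the vanishing of the off-diagonal blocks is the crux: I would do this by computing the $(i,j)$ entry of $R$ as the double sum $\sum_{k,l} n_{ik}^{\infty}\,(P^TD)_{kl}\,m_{lj}^{\infty}$, where $(P^TD)_{kl}=(-1)^l\binom{l}{k}$, and collapsing it using the binomial identities (Vandermonde-type convolutions) that already powered the proofs of Lemmas~\ref{lemma1} and~\ref{lemma2}. The alternating-sign binomial sums appearing there are exactly the telescoping/difference-sequence cancellations I expect to recur here.

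For part (b), rather than repeat the computation, I would exploit the relationship between the two conjugations. Taking transposes in (a) gives $M^T(D P)N^T=\bigoplus(D_1 P_1^T)^{\!\top}$ up to bookkeeping, and then conjugating by $D$ (using $D^2=I$ and that $D$ preserves the $2\times 2$ block structure since $D=D_1\bigoplus D_1\bigoplus\cdots$) converts $DP$ into $PD$ on the correct side and turns each block $(P_1^TD_1)$ into $D_1 P_1^T D_1=P_1 D_1$. Writing this out carefully, the left-hand side of (b), namely $(DM^TD)(PD)(DN^TD)$, is precisely $D\,[\,M^T (DP\cdot DD)\,N^T\,]\,D$ rearranged so that the inner matrix is the transpose of the product in (a); since $D$ commutes with the direct-sum decomposition, each summand $P_1^TD_1$ transforms into $D_1(P_1^TD_1)^T D_1=D_1 D_1 P_1 D_1=P_1 D_1$, yielding the claimed right-hand side.

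The main obstacle I anticipate is the off-diagonal vanishing in part (a): one must show that for index pairs straddling different $2\times 2$ blocks, the convolution of an entry of $N$, the binomial entry of $P^TD$, and an entry of $M$ sums to zero. This requires handling the floor functions $\lfloor (i-1)/2\rfloor$ and $\lfloor j/2\rfloor$ in the formulas for $n_{ij}^{\infty}$ and $m_{ij}^{\infty}$ by splitting into parity cases for $i$ and $j$, so that the binomial identities align correctly. All sums are finite because $N$ and $M$ are upper triangular, so convergence is not an issue; the work is purely combinatorial bookkeeping of signs and binomial convolutions, entirely analogous to the cancellations already carried out in Lemma~\ref{lemma2}.
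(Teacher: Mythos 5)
Your plan for part (a)---verifying directly that $N(P^TD)M$ is block diagonal by collapsing the entrywise triple sum---is a genuinely different route from the paper's, and it can in principle be completed, but as written it contains one step that would fail outright and omits the step that carries all the content. First, the failing step: your displayed computation
\[
P_1^TD_1=\left[\begin{matrix}1&0\\1&1\end{matrix}\right]\left[\begin{matrix}1&0\\0&-1\end{matrix}\right]=\left[\begin{matrix}1&0\\1&-1\end{matrix}\right]
\]
uses $P_1$, not $P_1^T$. The correct target block is
$P_1^TD_1=\left[\begin{matrix}1&1\\0&1\end{matrix}\right]\left[\begin{matrix}1&0\\0&-1\end{matrix}\right]=\left[\begin{matrix}1&-1\\0&-1\end{matrix}\right]$;
what you wrote down is $P_1D_1$, the block belonging to part (b). This is not cosmetic: $N$, $P^TD$ and $M$ are all upper triangular, so $N(P^TD)M$ is upper triangular, and its diagonal blocks can never equal the lower triangular matrix you propose to check against---the block-by-block verification you describe would simply fail. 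Second, the crux (off-diagonal vanishing and identification of the diagonal blocks) is only promised, with a gesture at ``Vandermonde-type convolutions''; no identity is actually stated or applied. The computation does close, but it requires real ingredients: summing over $k$ first via $\sum_{u\ge 0}(-1)^u\binom{a+u}{a}\binom{l}{i+u}=\binom{l-a-1}{l-i}$ (where $a=\lfloor (i-1)/2\rfloor$), then over $l$ via the finite-difference identity $\sum_{s}(-1)^s\binom{b}{s}\binom{n-s}{c}=\binom{n-b}{c-b}$ (where $b=\lfloor j/2\rfloor$, $c=i-a-1=\lfloor i/2\rfloor$), which yields $\bigl(N(P^TD)M\bigr)_{ij}=(-1)^j\binom{j-a-b-1}{c-b}$ and hence the claimed block structure. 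None of this is in the proposal, and it is precisely the theorem.

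It is worth contrasting with the paper's proof, which never touches the closed-form entries of $N$ and $M$ at all. The paper exploits the factorizations $N_{(m)}=H_{(m)}\cdots H_{(1)}$ and $M_{(m)}=U_{(1)}\cdots U_{(m)}$ from Lemma \ref{lemma1} and proves a single self-similarity identity, $H_{(1)}(P^TD)U_{(1)}=(P_1^TD_1)\oplus(P^TD)$, by one short telescoping binomial computation: conjugating by the innermost factors peels off one $2\times 2$ block and reproduces an exact copy of $P^TD$ in the corner, so induction on $m$ and letting $m\to\infty$ finish with no floor functions and no convolution identities. That recursion is the key idea your proposal misses; you mention the factored form only to ``suggest'' the block pattern and then abandon it for the brute-force computation. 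Your treatment of part (b)---transpose (a) and conjugate by $D$, so each block becomes $D_1(P_1^TD_1)^TD_1=D_1D_1P_1D_1=P_1D_1$---is essentially the paper's own deduction and is correct, apart from the sloppy intermediate expression $\bigoplus(D_1P_1^T)^{T}$, which should read $\bigoplus D_1P_1$.
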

\begin{proof}
(a) For each $j=1, 2, \ldots$, let $H_{(j)}=I_{2j-2}\bigoplus F$ and $U_{(j)}=I_{2j-2}\bigoplus J(1)$.
Let $m$ be an arbitrary positive integer with $n=2m+1$. First, we will show by
induction on $m$ that
\begin{equation*}
\left(H_{(m)}H_{(m-1)}\cdots H_{(1)}\right)P^TD \left(U_{(1)}U_{(2)}\cdots U_{(m)}\right)=Z \bigoplus (P^TD),
\end{equation*}
where $Z$ is an $n \times n$ matrix such that
$Z=\overbrace{(P^T_1D_1)\bigoplus (P^T_1D_1)\bigoplus \cdots \bigoplus (P^T_1D_1)}^{m}$.
When $m=1$, the first row of $H_{(1)}P^TDU_{(1)}$ is clearly
$[1, -1, 0, 0, \ldots]$.
Since for each $i$ and $j$ with $i, k=1, 2, \ldots$,
\[
(H_{(1)}P^TD)_{ik}=(-1)^k\biggl({{k}\choose{i}}-{{k}\choose{i+1}}+\cdots+(-1)^{k}{{k}\choose{k}}\biggr)
=(-1)^k{{k-1}\choose{i-1}},
\]
the second row of $H_{(1)}P^TDU_{(1)}$ is $[0, -1, 1, -1, 1, \ldots]U_{(1)}=[0, -1, 0, 0, 0, \ldots]$.
For each $i$ and $j$ with $i, j \geq 2$, we have
\[
(H_{(1)}P^TDU_{(1)})_{ij}=(-1)^{j-1}{{j-2}\choose{i-1}}+(-1)^j{{j-1}\choose{i-1}}=(-1)^{j-2}{{j-2}\choose{i-2}},
\]
and so $H_{(1)}P^TDU_{(1)}=(P^T_1D_1)\bigoplus (P^TD)$.
By induction on $m$, it follows
\begin{equation*}
N_{(m)}P^TDU_{(m)}=\overbrace{(P^T_1D_1)\bigoplus (P^T_1D_1)\bigoplus \cdots \bigoplus (P^T_1D_1)}^{m}\bigoplus (P^TD),
\end{equation*}
where $N_{(m)}=H_{(m)}H_{(m-1)}\cdots H_{(1)}$ and $M_{(m)}=U_{(1)}U_{(2)}\cdots U_{(m)}$. Since $m$ was an arbitrary positive integer,
\[
NP^TDM=({P^T_1}D_1)\bigoplus ({P^T_1}D_1)\bigoplus \cdots \bigoplus ({P^T_1}D_1) \bigoplus \cdots,
\]
where $N=\lim_{m \to \infty}N_{(m)}$ and  $M=\lim_{m \to \infty}M_{(m)}$.\\
(b) It follows directly from (a) that
$$(DM^TD)PD(DN^TD)=(P_1D_1)\bigoplus (P_1D_1)\bigoplus \cdots \bigoplus (P_1D_1)\bigoplus \cdots,$$
completing the proof.
\end{proof}
Let $\mathbf e_i$ denote the $i$th column of the identity matrix $I$ ($i=0, 1, \ldots$).
Then $\mathbf{E}_{\lambda} (P^TD)$ and $\mathbf{E}_{\lambda} (PD)$ ($\lambda \in \{ 1, -1\}$)
can be characterized via Theorem \ref{similarity} as follows:

\begin{thm}
\label{bases}
Let $N$ resp. $M=N^{-1}$ be the matrices in {\rm Lemma \ref{lemma1} resp.
Lemma \ref{lemma2}}. Then the following hold:
\begin{itemize}
  \item [\rm (a)] $\{M \mathbf e_0, M\mathbf e_2, M\mathbf e_4, \ldots\}$ is a basis for $\mathbf{E}_1(P^TD)$.
  \item [\rm (b)] $\{M(\mathbf e_0+2 \mathbf e_1), M(\mathbf e_2+2\mathbf e_3), \ldots\}$
              is a basis for $\mathbf{E}_{-1}(P^TD)$.
  \item [\rm (c)]  $\{DN^TD(2\mathbf e_0+\mathbf e_1), DN^TD(2\mathbf e_2+\mathbf e_3) \ldots\}$
               is a basis for $\mathbf{E}_1(PD)$.
  \item [\rm (d)] $\{DN^TD \mathbf e_1, DN^TD\mathbf e_3, DN^TD\mathbf e_5, \ldots\}$
              is a basis for $\mathbf{E}_{-1}(PD)$.
\end{itemize}
\end{thm}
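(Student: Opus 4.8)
The plan is to read off all four eigenspaces directly from the block-diagonalizations in Theorem~\ref{similarity}, since an invertible similarity carries $\lambda$-eigenspaces to $\lambda$-eigenspaces bijectively. First I would confirm that both relations in Theorem~\ref{similarity} are genuine similarities. Because $M=N^{-1}$, part (a) reads $N(P^TD)N^{-1}=S_1$, where $S_1=(P_1^TD_1)\bigoplus(P_1^TD_1)\bigoplus\cdots$; and since $(DN^TD)(DM^TD)=DN^T(DD)M^TD=D(MN)^TD=DID=I$ (using $D^2=I$ and $MN=N^{-1}N=I$), the matrix $DM^TD$ is the inverse of $DN^TD$, so part (b) reads $(DN^TD)^{-1}(PD)(DN^TD)=S_2$, where $S_2=(P_1D_1)\bigoplus(P_1D_1)\bigoplus\cdots$. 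Hence $P^TD=MS_1N$ and $PD=(DN^TD)S_2(DN^TD)^{-1}$, which give the correspondences $\mathbf{E}_\lambda(P^TD)=M\,\mathbf{E}_\lambda(S_1)$ and $\mathbf{E}_\lambda(PD)=(DN^TD)\,\mathbf{E}_\lambda(S_2)$.

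Next I would compute the eigenvectors of the $2\times 2$ building blocks. With $P_1=\left[\begin{smallmatrix}1&0\\1&1\end{smallmatrix}\right]$ and $D_1={\rm diag}(1,-1)$ one has $P_1^TD_1=\left[\begin{smallmatrix}1&-1\\0&-1\end{smallmatrix}\right]$ and $P_1D_1=\left[\begin{smallmatrix}1&0\\1&-1\end{smallmatrix}\right]$, each triangular with eigenvalues $1$ and $-1$. A one-line computation gives, for $P_1^TD_1$, the eigenvector $\mathbf{e}_0$ for $\lambda=1$ and $\mathbf{e}_0+2\mathbf{e}_1$ for $\lambda=-1$; and for $P_1D_1$, the eigenvector $2\mathbf{e}_0+\mathbf{e}_1$ for $\lambda=1$ and $\mathbf{e}_1$ for $\lambda=-1$, where here $\mathbf{e}_0,\mathbf{e}_1$ denote the standard vectors of the $2$-dimensional block.

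Then I would assemble the eigenspaces of the infinite direct sums. Since $S_1$ and $S_2$ are block diagonal with the $k$th block occupying coordinates $\{2k,2k+1\}$, a vector lies in $\mathbf{E}_\lambda(S_i)$ precisely when its restriction to each block is a $\lambda$-eigenvector of that block. Placing the block eigenvectors found above into block $k$ (so that the local $\mathbf{e}_0,\mathbf{e}_1$ become the global $\mathbf{e}_{2k},\mathbf{e}_{2k+1}$) shows that $\mathbf{E}_1(S_1)$ is spanned by $\{\mathbf{e}_0,\mathbf{e}_2,\mathbf{e}_4,\dots\}$, $\mathbf{E}_{-1}(S_1)$ by $\{\mathbf{e}_0+2\mathbf{e}_1,\mathbf{e}_2+2\mathbf{e}_3,\dots\}$, $\mathbf{E}_1(S_2)$ by $\{2\mathbf{e}_0+\mathbf{e}_1,2\mathbf{e}_2+\mathbf{e}_3,\dots\}$, and $\mathbf{E}_{-1}(S_2)$ by $\{\mathbf{e}_1,\mathbf{e}_3,\dots\}$. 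Applying $M$ in the first two cases and $DN^TD$ in the last two, and invoking the correspondences of the first paragraph, yields exactly the four proposed generating sets, establishing (a)--(d).

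The step needing genuine care is the sense in which these generating sets are \emph{bases} of the infinite-dimensional eigenspaces. Linear independence is immediate, since $M$ and $DN^TD$ are invertible and carry the sets back to the manifestly independent $\mathbf{e}_{2k}$-type vectors. For spanning I would verify that every eigenvector is a well-defined, coordinatewise-finite infinite combination of the proposed vectors: the banded structure of $M$ (row $i$ has nonzero entries only in columns $i,\dots,2i$) guarantees that in $\sum_k c_k M\mathbf{e}_{2k}$ each coordinate receives only finitely many contributions, while the upper-triangularity of $N$ (so that column $j$ of $DN^TD$ meets row $i$ only when $j\le i$) does the same for $\sum_k c_k (DN^TD)\mathbf{e}_{2k+1}$. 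Thus the expansions $\mathbf{x}=M(N\mathbf{x})$ and $\mathbf{x}=(DN^TD)\big((DN^TD)^{-1}\mathbf{x}\big)$ converge coordinatewise and exhibit each eigenvector in terms of the claimed basis; dispatching this convergence point is the main obstacle.
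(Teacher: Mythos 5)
Your proposal is correct and follows essentially the same route as the paper: both invoke Theorem~\ref{similarity} to conjugate $P^TD$ and $PD$ into the direct sums of $2\times 2$ blocks, solve the block eigenproblems (yielding the local eigenvectors $\mathbf e_0$, $\mathbf e_0+2\mathbf e_1$, $2\mathbf e_0+\mathbf e_1$, $\mathbf e_1$), and push the resulting bases forward by $M$ resp.\ $DN^TD$. If anything, you are more explicit than the paper on two points it leaves implicit, namely that $DM^TD=(DN^TD)^{-1}$ and that the infinite coordinatewise combinations remain well defined under multiplication by $M$ and $DN^TD$.
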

\begin{proof}
(a) Let $\mathbf B_{(1)}=\{\mathbf e_0, \mathbf e_2, \mathbf e_4, \ldots\}$
and $\mathbf x=[x_0, x_1, x_2, \ldots]^T \in \mathbf{E}_1(NP^TDM)$. Then,
by Theorem \ref{similarity}, we have
$$(NP^TDM-I)\mathbf x=\biggl( \bigoplus_{i=1}^{\infty} \left[ \begin{array}{cc}
          0 & -1   \\
          0 & -2
            \end{array} \right] \biggr) \mathbf x=\mathbf 0,$$
which implies that $x_{i}=t_i, x_{i+1}=0$ for each $i=0, 2, 4, \ldots$ and $t_i \in \mathbb R$.
So $\mathbf B_{(1)}$ spans $\mathbf{E}_1(NP^TDM)$ and  since
$\mathbf e_0, \mathbf e_2, \mathbf e_4, \ldots$ are linearly independent,
$\mathbf B_{(1)}$ is a basis for $\mathbf{E}_1(NP^TDM)$.
Therefore $\{M \mathbf e_0, M\mathbf e_2, M\mathbf e_4, \ldots\}$ is a basis for $\mathbf{E}_1(P^TD)$.\\
(b) Let $\mathbf B_{(-1)}=\{\mathbf e_0+2 \mathbf e_1, \mathbf e_2+ 2\mathbf e_3, \ldots\}$
and $\mathbf y=[y_0, y_1, y_2, \ldots]^T \in \mathbf{E}_{-1}(NP^TDM)$. Then,
by Theorem \ref{similarity}, we have
\[
 (NP^TDM+I)\mathbf y=\biggl( \bigoplus_{i=1}^{\infty} \left[ \begin{array}{cc}
         2 & -1   \\
         0 & 0
            \end{array}\right] \biggr) \mathbf y=\mathbf 0,
\]
which implies that $\left[ \begin{array}{c}
          y_{i}  \\
          y_{i+1}
            \end{array}\right] =s_i \left[ \begin{array}{c}
          1   \\
          2
            \end{array}\right]$
for each $i=0, 2, 4, \ldots$ and $s_i \in \mathbb R$.
So $\mathbf B_{(-1)}$ spans $\mathbf{E}_1(NP^TDM)$ and since
$\mathbf e_0+2 \mathbf e_1, \mathbf e_2+ 2\mathbf e_3, \ldots$ are
linearly independent, $\mathbf B_{(-1)}$ is a basis for $\mathbf{E}_{-1}(NP^TDM)$.
Therefore $\{M(\mathbf e_0+2 \mathbf e_1), M(\mathbf e_2+2\mathbf e_3), \ldots\}$
is a basis for $\mathbf{E}_{-1}(P^TD)$. \\
Clauses (c) and (d) can be proven similarly.
\end{proof}

%Let $ {P^T\hspace{0.02cm}^\downarrow}$ denote a matrix defined by
Consider now
\[
      {P^T\hspace{0.01cm}^\downarrow}=\left[\begin{array}{ccccccccc}
           ~ 1 & 0 & 0 & 0 & 0  & 0 & 0 & 0 &   \cdots~\\
            ~0 & 1 & 0 & 0 & 0  & 0 & 0 & 0 &   \cdots~\\
            ~0 & 1 & 1 & 0 & 0  & 0 & 0 & 0 &   \cdots~\\
            ~0 & 0 & 2 & 1 & 0  & 0 & 0 & 0 &   \cdots~\\
            ~0 & 0 & 1 & 3 & 1  & 0 & 0 & 0 &   \cdots~\\
            ~0 & 0 & 0 & 3 & 4  & 1 & 0 & 0 &   \cdots~\\
            ~0 & 0 & 0 & 1 & 6  & 5 & 1 & 0 &   \cdots~\\
            ~0 & 0 & 0 & 0 & 4  &10 & 6 & 1 &   \cdots~\\
             ~\vdots & \vdots & \vdots & \vdots &\vdots & \vdots & \vdots & \vdots & \ddots~
                   \end{array}\right].
\]
%where the column $j,(j=0, 1, 2, \ldots)$, is that of $P^T$ preceded by ${\mathbf 0}_j$
%such that ${\mathbf 0}_j$ is the $j$th vector of zeros.
The following is a matrix expression of Theorem \ref{bases}; the last two
clauses appeared in \cite{Choi}.
\begin{cor}
\label{columns}
Let $Q=P+\left[\begin{array}{c|c}
1 & {\mathbf 0}^T\\\cline{1-2}
{\mathbf 0}& P
\end{array} \right]$
and $D={\rm diag}((-1)^0, (-1)^1, \ldots)$, where $P=\left[\begin{matrix}
           {i}\choose{j}
         \end{matrix} \right] (i, j=0, 1, 2, \ldots)$.
Then the following hold:
\begin{itemize}
  \item [\rm (a)] The columns of $P^{T\downarrow}$ form a basis for $\mathbf{E}_1(P^TD)$.
  \item [\rm (b)] The columns of $Q^{T\downarrow}(0|0)$ form a basis for $\mathbf{E}_{-1}(P^TD)$.
  \item [\rm (c)] The columns of
$\left[\begin{array}{c}
{\mathbf 0}^T\\
{P\hspace{0.01cm}^\downarrow}
\end{array} \right]$
form a basis for $\mathbf{E}_{-1}(PD)$.
  \item [\rm (d)] The columns of
${Q\hspace{0.01cm}^\downarrow}$ form a basis for $\mathbf{E}_1(PD)$.
\end{itemize}
\end{cor}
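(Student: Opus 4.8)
The statement is a reformulation of Theorem \ref{bases}, so the plan is not to recompute any eigenspaces but to identify, column by column, the four bases produced there with the columns of the four displayed Pascal-type matrices. By Theorem \ref{bases} the families $\{M\mathbf e_{2k}\}$, $\{M(\mathbf e_{2k}+2\mathbf e_{2k+1})\}$, $\{DN^TD\,\mathbf e_{2k+1}\}$, and $\{DN^TD(2\mathbf e_{2k}+\mathbf e_{2k+1})\}$ (for $k=0,1,2,\ldots$) are already bases of $\mathbf E_1(P^TD)$, $\mathbf E_{-1}(P^TD)$, $\mathbf E_{-1}(PD)$, and $\mathbf E_1(PD)$, respectively. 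Hence it suffices to show that the $k$th member of each family equals the $k$th column of $P^{T\downarrow}$, of $Q^{T\downarrow}(0|0)$, of the matrix in clause (c), and of $Q^\downarrow$. Every such verification is entrywise and rests only on the explicit formulas for $M$ (Lemma \ref{lemma2}) and $N$ (Lemma \ref{lemma1}).

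First I would record the entries needed. From Lemma \ref{lemma2}, $m^{\infty}_{ij}=\binom{\lfloor j/2\rfloor}{j-i}$ for $1\le i\le j$. From Lemma \ref{lemma1} together with $(DN^TD)_{ij}=(-1)^{i+j}n^{\infty}_{ji}$, a short computation shows that $DN^TD$ is lower triangular with $(DN^TD)_{i,2k+1}=\binom{i-k-1}{k}$ and $(DN^TD)_{i,2k}=\binom{i-k-1}{k-1}$: the floor $\lfloor (j-1)/2\rfloor$ appearing in $n^{\infty}_{ji}$ produces the index $k$ or $k-1$ according as the column $j=2k+1$ is odd or $j=2k$ is even, and the alternating signs of $N$ cancel against $(-1)^{i+j}$. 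I would likewise read off that the $i$th entry of the $k$th column of $P^{T\downarrow}$ is $\binom{k}{i-k}$, that of $Q^{T\downarrow}(0|0)$ is $Q_{k+1,\,i-k}$, that of the matrix in clause (c) is $\binom{i-k-1}{k}$ (a downward shift of $P^{\downarrow}$ by one row), and that of $Q^\downarrow$ is $Q_{i-k,\,k}$, where $Q_{pq}=\binom{p}{q}+\binom{p-1}{q-1}$ for $p,q\ge1$.

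With these in hand the four identifications collapse to elementary identities. Clause (a) is just the symmetry $\binom{k}{2k-i}=\binom{k}{i-k}$, and clause (c) is immediate from $(DN^TD)_{i,2k+1}=\binom{i-k-1}{k}$. Clause (b) amounts to $\binom{k}{2k-i}+2\binom{k}{2k+1-i}=Q_{k+1,\,i-k}$; after the substitution $s=i-k$ and the symmetry of binomial coefficients this becomes $\binom{k}{s}+2\binom{k}{s-1}=\binom{k+1}{s}+\binom{k}{s-1}$, i.e.\ Pascal's rule $\binom{k}{s}+\binom{k}{s-1}=\binom{k+1}{s}$. Clause (d) amounts to $2\binom{i-k-1}{k-1}+\binom{i-k-1}{k}=Q_{i-k,\,k}$, which reduces to Pascal's rule $\binom{i-k-1}{k-1}+\binom{i-k-1}{k}=\binom{i-k}{k}$. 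Throughout, the boundary entries (the case $i=0$, the case $k=0$, and the leading $1$ or $2$ atop each column) must be checked separately against the normalizations $m^{\infty}_{00}=n^{\infty}_{00}=1$ and $Q_{00}=2$, using the convention that $\binom{p}{q}=0$ when $p<q$ or $q<0$.

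I expect the only genuine friction to be bookkeeping rather than mathematics: keeping the downward shift $A\mapsto A^{\downarrow}$ (which replaces the row index $i$ by $i-j$ in column $j$), the deletion $(0|0)$, and the even/odd floor functions mutually consistent, so that each claimed column lines up with the correct $M\mathbf e$ or $DN^TD\mathbf e$ combination. Once the index translation is pinned down, clauses (a)--(d) are nothing more than binomial symmetry and Pascal's rule, with clauses (c) and (d) simultaneously recovering the two results of \cite{Choi}.
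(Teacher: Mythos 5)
Your proposal is correct and follows essentially the same route as the paper: invoke Theorem \ref{bases} and identify each basis vector ($M\mathbf e_{2k}$, $M(\mathbf e_{2k}+2\mathbf e_{2k+1})$, $DN^TD\,\mathbf e_{2k+1}$, $DN^TD(2\mathbf e_{2k}+\mathbf e_{2k+1})$) entrywise with the corresponding column of the Pascal-type matrix, using the explicit entries of $M$ and $DN^TD$ from Lemmas \ref{lemma1} and \ref{lemma2} together with binomial symmetry and Pascal's rule. The only difference is cosmetic: the paper writes out clauses (a) and (b) and dismisses (c) and (d) as ``similar,'' whereas you carry out all four index translations explicitly, which is if anything slightly more complete.
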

\begin{proof}
(a) Let $(i, j)$ be a pair of integers with $i, j\geq 0$.
The $i$th component of $M{\mathbf e}_{2j}$ equals
${\displaystyle j \choose {2j-i}}$ when $i \leq 2j$, and equals
$0$, otherwise; thus the $j$th column of $P^{T\downarrow}$ is
$$M{\mathbf e}_{2j}=(\overbrace{0, \ldots, 0}^j,{j \choose 0}, {j \choose 1}, \ldots, {j \choose j}, 0, 0, \ldots)^T.$$

\noindent
(b) The $i$th component of
$M({\mathbf e}_{2j}+2{\mathbf e}_{2j+1})$ equals
$$
{j \choose {2j-i}}+2{j \choose {2j-i+1}}={{j+1} \choose {2j-i+1}}+{j \choose {2j-i+1}},
$$
when $i\leq 2j+1$, and equals $0$ otherwise; thus the $j$th column of
$$ P^{T\downarrow}(0|0)+\left[\begin{array}{c|c}
1 & {\mathbf 0}^T\\\cline{1-2}
{\mathbf 0}& P^T
\end{array} \right]^{T\downarrow}(0|0)=Q^{T\downarrow}(0|0) $$ is
\begin{equation*}
\begin{split}
M({\mathbf e}_{2j}+2{\mathbf e}_{2j+1})&=[\overbrace{0, \ldots, 0}^j,{j+1 \choose 0}, {j+1 \choose 1}, \ldots, {j+1 \choose j+1}, 0, 0, \ldots]^T\\
&+[\overbrace{0, \ldots, 0}^{j+1},{j \choose 0}, {j \choose 1}, \ldots, {j \choose j}, 0, 0, \ldots]^T.
\end{split}
\end{equation*}
Using the fact that for each $i$ and $j$ with $i \geq j\geq 1$,
$$(DN^TD)_{ij}=(-1)^i(-1)^{i-j} {{\lfloor {{j-1}\over 2}\rfloor+i-j}\choose
{\lfloor {{j-1}\over 2}\rfloor}}(-1)^j,$$
clauses (c) and (d) can be proven similarly.
\end{proof}

%The sequence $\mathbf F=(F_0, F_1, F_2, \ldots)^T$ defined by
%$F_0=0, F_1=1, F_n=F_{n-1}+F_{n-2}~(n \geq 2)$
%and the sequence $\mathbf L=(L_0, L_1, L_2, \ldots)^T$ defined by
%$L_0=2, L_1=1, L_n=L_{n-1}+L_{n-2}~(n \geq 2)$
%are the well-known Fibonacci and Lucas sequences, respectively \cite{Comtet}.

%%%%%%%%%%%%%%%%%%%%%%%%%%%%%%%%%%%%%%%%%%%%%%%%%%%%%%%%%%%%%%%%
\section{Invariant sequences of two kinds: Relations and examples}
\par
\setcounter{num}{4}
\setcounter{equation}{0}
We begin by some basic examples of (inverse) invariant sequences.

\begin{ex}
\label{fibomat}
{\rm
It follows from Corollary \ref{columns} (c) and (d) that
the Fibonacci sequence $\mathbf F$ is an invariant sequence of the first kind
and the Lucas sequence $\mathbf L$ is an inverse invariant sequence of the first kind.

The $i$th row of $P^{T\downarrow}$ ($i=0,1,\ldots$) is
\begin{equation}
\label{fibo}
[\overbrace{0, \ldots, 0}^{\lceil {{i}\over 2}\rceil}, {{\lceil {{i}\over 2}\rceil}\choose {\lfloor {{i}\over 2}\rfloor}},\ldots, {{i-2} \choose {2}}, {{i-1} \choose {1}}, {{i} \choose {0}}, 0, 0, \ldots],
\end{equation}
from which we can get that $J(0)\mathbf F$ is an invariant sequence of the
second kind and $J(0)\mathbf L$ is an inverse invariant sequence of the second kind.
Recall that $J(0)$ is the infinite Jordan block with $0$ in the main diagonal.
}
\end{ex}

In the following theorem, we provide a general mechanism for transforming
invariant into inverse invariant sequences, and vice versa.
\begin{thm}
\label{transform}
Let $J(\lambda)$ denote the infinite Jordan block with $\lambda$ in the main diagonal.
Then the following hold:
\begin{itemize}
  \item [\rm (a)] If $\mathbf x$ is an invariant sequence of the second kind, then $(J(1)+J(0))^T{\mathbf x}$
is an inverse invariant sequence of the second kind.
  \item [\rm (b)] If $\mathbf x$ is an inverse invariant sequence of the second kind,
  then $J(2)^{-1}J(0) {\mathbf x}$ is an invariant sequence of the second kind.
  \item [\rm (c)] If $\mathbf x$ is an invariant sequence of the first kind, then
  $\left(-J(0)J(-2)^{-1}\right)^T {\mathbf x}$ is an inverse invariant sequence of the first kind.
  \item [\rm (d)] If $\mathbf x$ is an inverse invariant sequence of the first kind,
  then $(J(-1)+J(0)){\mathbf x}$ is an invariant sequence of the first kind.
  \end{itemize}
\end{thm}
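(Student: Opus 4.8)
The plan is to prove each of (a)--(d) by establishing a single \emph{anticommutation} identity and then invoking Observation \ref{obs1}. Write $S=J(0)$ for the upper shift, so that $J(\lambda)=\lambda I+S$ and $J(0)^{T}=S^{T}$ is the lower shift. The four transforming operators are then (rational) functions of one shift:
\begin{equation*}
R_a=(J(1)+J(0))^{T}=I+2S^{T},\qquad R_b=J(2)^{-1}J(0)=(2I+S)^{-1}S,
\end{equation*}
\begin{equation*}
R_c=\left(-J(0)J(-2)^{-1}\right)^{T}=(2I-S^{T})^{-1}S^{T},\qquad R_d=J(-1)+J(0)=-I+2S.
\end{equation*}
Since $PD$ and $P^{T}D$ are involutions, if an operator $R$ anticommutes with $T\in\{P^{T}D,\,PD\}$, i.e.\ $TR=-RT$, then $R$ carries $\mathbf{E}_1(T)$ into $\mathbf{E}_{-1}(T)$ and conversely; by Observation \ref{obs1} this is precisely what each clause asserts. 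So it suffices to prove $P^{T}D\,R_a=-R_a\,P^{T}D$ and $P^{T}D\,R_b=-R_b\,P^{T}D$ for (a),(b), and the $PD$-analogues for (c),(d).

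The heart of the argument is how $T:=P^{T}D$ commutes with the two shifts. Using $(P^{T}D)_{ij}=(-1)^{j}\binom{j}{i}$ together with Pascal's rule entrywise, I would verify the two identities $S\,T+T\,S+S\,T\,S=0$ and $T+S^{T}T+T\,S^{T}=0$. Rearranged, these read $TS=-(I+S)^{-1}ST$ and $TS^{T}=-(I+S^{T})T$; equivalently $TS=\phi(S)\,T$ and $TS^{T}=\psi(S^{T})\,T$, where $\phi(u)=-u/(1+u)$ and $\psi(v)=-1-v$. A short induction then gives $TS^{k}=\phi(S)^{k}T$ and $T(S^{T})^{k}=\psi(S^{T})^{k}T$, whence, for any formal power or rational series $g$ (all entrywise finite, since $S,S^{T}$ are shifts),
\begin{equation*}
T\,g(S)=g(\phi(S))\,T,\qquad T\,g(S^{T})=g(\psi(S^{T}))\,T .
\end{equation*}
In words, $T$ conjugates each shift into a M\"obius (resp.\ affine) function of itself; note $\phi$ and $\psi$ are involutions, consistent with $T^{2}=I$.

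With this in hand, (a) and (b) reduce to scalar checks. Writing $R_a=g_a(S^{T})$ with $g_a(v)=1+2v$ and $R_b=g_b(S)$ with $g_b(u)=u/(2+u)$, I would verify the ``oddness'' relations $g_a(\psi(v))=-g_a(v)$ and $g_b(\phi(u))=-g_b(u)$, each a two-line computation; then $T R_a=g_a(\psi(S^{T}))T=-R_aT$ and $T R_b=g_b(\phi(S))T=-R_bT$. For (c) and (d) I would exploit a transpose--conjugation symmetry between the two kinds: the map $\Phi(X)=DX^{T}D$ is an involutive anti-automorphism with $\Phi(P^{T}D)=PD$, $\Phi(S)=-S^{T}$, $\Phi(S^{T})=-S$, and a direct substitution gives $\Phi(R_b)=-R_c$ and $\Phi(R_a)=-R_d$. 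Applying $\Phi$ to the already-proved identities (and using that $\Phi$ reverses products) turns $P^{T}D\,R_b=-R_b\,P^{T}D$ and $P^{T}D\,R_a=-R_a\,P^{T}D$ into $PD\,R_c=-R_c\,PD$ and $PD\,R_d=-R_d\,PD$, which are (c) and (d).

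The main obstacle is the pair (b),(c), where the operator contains the inverse Jordan block $J(\pm2)^{-1}$: a naive attempt to clear denominators leaves $(2I\pm S)^{-1}$ sandwiched around $T$ and stalls. The resolution is exactly the observation above, that $T$ acts on functions of the shift by \emph{precomposition} with the fixed map $\phi$ (resp.\ $\psi$), which collapses the desired anticommutation to the scalar identity $g\circ\phi=-g$. A secondary point, which I would note explicitly, is that every series in $S$ or $S^{T}$ (including the Jordan-block inverses) is entrywise a finite sum, so the matrix identities, and their application to the sequences under consideration, are well defined.
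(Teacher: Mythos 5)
Your proof is correct, but it takes a genuinely different route from the paper's. The paper argues in coordinates: for (a), it invokes Corollary \ref{columns}(a) to write $\mathbf{x}=P^{T\downarrow}\mathbf{b}$, proves the matrix identity $LP^{T\downarrow}(0|0)=P^{T\downarrow}$ for the lower triangular matrix $L$ with entries $(-1)^{i+j}$ (whose inverse is $J(1)^T$), deduces $Q^{T\downarrow}(0|0)\mathbf{b}=(J(1)+J(0))^T\mathbf{x}$, and then cites Corollary \ref{columns}(b); clauses (b)--(d) are handled by analogous identities among the basis matrices $P^{T\downarrow}$, $Q^{T\downarrow}(0|0)$, $Q^{\downarrow}$ and $P^{\downarrow}$. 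You bypass that basis machinery entirely: your two Pascal-rule identities $TS^T=-(I+S^T)T$ and $(I+S)TS=-ST$ are correct entrywise computations with $T_{ij}=(-1)^j\binom{j}{i}$, the precomposition rules $Tg(S)=g(\phi(S))T$ and $Tg(S^T)=g(\psi(S^T))T$ follow, the scalar oddness checks $g_a\circ\psi=-g_a$ and $g_b\circ\phi=-g_b$ are right, and the anti-automorphism $\Phi(X)=DX^TD$ does satisfy $\Phi(P^TD)=PD$, $\Phi(S)=-S^T$, $\Phi(R_a)=-R_d$, $\Phi(R_b)=-R_c$, so all four anticommutation relations hold and Observation \ref{obs1} finishes the argument. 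What your approach buys: it is self-contained (it needs only Observation \ref{obs1}, not Corollary \ref{columns}, hence none of the $N$, $M$ similarity apparatus of Section 3), it explains all four clauses by a single mechanism, and it in fact produces a whole family of such transformations (any series $g$ with $g\circ\phi=-g$, resp.\ $g\circ\psi=-g$, works). What the paper's approach buys: explicit coordinates of the transformed sequence in a basis of the target eigenspace, which the paper reuses later (Corollaries \ref{newcor1} and \ref{newcor2}, the Fibonacci--Lucas and Bernoulli examples). One caveat you should state more honestly: your closing claim that everything is ``entrywise a finite sum'' is true for the matrix identities, but not for the application of $T=P^TD$ (or of $R_b$, $R_c$, whose rows are infinite) to a sequence; the step $T(R\mathbf{x})=(TR)\mathbf{x}$ is a rearrangement of conditionally convergent series. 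This is, however, precisely the level of rigor of the paper itself --- the definition of a second-kind invariant sequence already presupposes convergence of such sums, and the paper's own proof performs the same kind of interchange when it applies $J(2)^{-1}J(0)$ to $Q^{T\downarrow}(0|0)\mathbf{b}$ --- so it is a shared informality rather than a gap specific to your argument.
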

\begin{proof}
(a) If $\mathbf x$ is an invariant sequence of the second kind, then by
Corollary \ref{columns} (a), there exists $\mathbf b \in \mathbb{R}^\infty$
such that $P^{T\downarrow}\mathbf b =\mathbf x$. Let $L=[l_{ij}]$ ($i, j=0, 1, 2,\ldots$)
be the infinite lower triangular matrix defined by
$$ l_{ij}=\left \{ \begin{array}{lr}
(-1)^{i+j}, &  {\rm if}~ i \geq j, \\
 0, &  {\rm if}~ i<j.
\end{array} \right.$$
Then $LP^{T\downarrow}(0|0)=P^{T\downarrow}$
because for each $i$ and $j$ with $i, j \geq 0$, the $(i, j)$ entry of $LP^{T\downarrow}(0|0)$ equals
$$ (-1)^{i+j}{j+1 \choose 0}+(-1)^{i+j+1}{j+1 \choose 1}+\cdots+(-1)^{i+j+i-j}{j+1 \choose {i-j}}={j \choose i-j}$$
when $i \geq j$, and equals $0$ otherwise, which coincides with the $(i, j)$ entry of $P^{T\downarrow}$.
Thus,
\begin{equation}
\label{2.2}
LQ^{T\downarrow}(0|0)=P^{T\downarrow}+L\left[\begin{array}{c}
{\mathbf 0}^T\\
P^{T\downarrow}
\end{array} \right].
\end{equation}
From the fact that $L^{-1}=J(1)^T$ and by Corollary \ref{columns} (b), it follows that
\begin{equation*}
Q^{T\downarrow}(0|0)\mathbf b=J(1)^TP^{T\downarrow}\mathbf b+\left[\begin{array}{c}
{\mathbf 0}^T\\
P^{T\downarrow}
\end{array} \right]\mathbf b=J(1)^T{\mathbf x}+[0, {\mathbf x}^T]^T=(J(1)+J(0))^T{\mathbf x}
\end{equation*}
is an inverse invariant sequence of the second kind.\\
(b) Let $\mathbf x$ be an inverse invariant sequence of the second kind.
From (\ref{2.2}) and $J(0)J(1)^T=J(1)$, it readily follows that
$$ J(2)^{-1}J(0)Q^{T\downarrow}(0|0)=P^{T\downarrow}$$
and so by Corollary \ref{columns} (b), we have that
$J(2)^{-1}J(0){\mathbf x}$ is an invariant sequence of the second kind. \\
(c) Since $Q^{\downarrow}=P^{\downarrow}+\left[\begin{array}{c|c}
1 & {\mathbf 0}^T\\\cline{1-2}
0& {\mathbf 0}^T\\\cline{1-2}
 {\mathbf 0}& P^{\downarrow}
\end{array} \right]$, we have
\begin{equation*}
J(0)^TQ^{\downarrow}=J(0)^TP^{\downarrow}+\left[\begin{array}{c|c}
0& {\mathbf 0}^T\\\cline{1-2}
1 & {\mathbf 0}^T\\\cline{1-2}
0& {\mathbf 0}^T\\\cline{1-2}
 {\mathbf 0}& P^{\downarrow}
\end{array} \right].
\end{equation*}
So $\Omega \left(
\left[\begin{array}{c}
{\mathbf 0}^T\\
Q^{\downarrow}
\end{array} \right]-
\left[\begin{array}{c}
{\mathbf 0}^T\\
P^{\downarrow}
\end{array} \right]\right)=\left[\begin{array}{c}
{\mathbf 0}^T\\
P^{\downarrow}
\end{array} \right]$, because
${i\choose i}+{i\choose i+1}+\cdots+ {i\choose i+k}={{i+1} \choose {i+k+1}}$ for $i, k=0, 1,\ldots$,
where $\Omega$ is the infinite $(0,1)$-matrix with $1$'s everywhere on and below its main diagonal.
Since $-J(-1)^T=\Omega^{-1}$, we get
$(I-J(-1)^T)^{-1}\left[\begin{array}{c}
{\mathbf 0}^T\\
Q^{\downarrow}
\end{array} \right]=\left[\begin{array}{c}
{\mathbf 0}^T\\
P^{\downarrow}
\end{array} \right]$, which implies that $\left(-J(0)J(-2)^{-1}\right)^T {\mathbf x}$ is an inverse
invariant sequence of the first kind. \\
Clauses (d) easily follows from (c) similarly.
\end{proof}

Let $\tau_1={{1+\sqrt{5}}\over 2}$ and $\tau_2={{1-\sqrt{5}}\over 2}$.
It is well known that $F_n={1 \over \sqrt{5}}\tau_1^{n}-{1 \over \sqrt{5}}\tau_2^{n}$
and $L_n=\tau_1^{n}+\tau_2^{n}$ where $F_n$ and $L_n$ are the $n$th terms
of ${\mathbf F}$ and ${\mathbf L}$, respectively ($n=0, 1, 2, \ldots$)  \cite{Brualdi}.
Since $J(0){\mathbf F}$ is an invariant sequence of the second kind by (\ref{fibo}),
it follows from Theorem \ref{transform} (a) that
$$J(1)^TJ(0){\mathbf F}+J(0)^TJ(0){\mathbf F}=J(1)^TJ(0){\mathbf F}+{\mathbf F}$$
is an inverse invariant sequence of the second kind. In fact, the $n$th term ($n=0, 1, 2, \ldots$)
of  $J(1)^TJ(0){\mathbf F}+{\mathbf F}$ is
$$ {1 \over \sqrt{5}}\tau_1^{n+2}-{1 \over \sqrt{5}}\tau_2^{n+2}+
                {1 \over \sqrt{5}}\tau_1^{n}-{1 \over \sqrt{5}}\tau_2^{n}=L_{n+1}, $$
which implies that $J(1)^TJ(0){\mathbf F}+{\mathbf F}=J(0){\mathbf L}$.
On the other hand, since for each $i$ and $j$ with $i, j=0, 1, 2,\ldots$,
$$J(2)^{-1}=\left \{ \begin{array}{lr}
(-1)^{j-i}({1 \over 2})^{j-i+1}, &  {\rm if}~ i \leq j, \\
 0, &  {\rm if}~ i>j,
\end{array} \right.$$
and the $n$-th term of  $J(2)^{-1}J(0)^2{\mathbf L}$ is
$$
\sum_{k=0}^{\infty}(-1)^k(1/2)^{k+1}(\tau_1^{k+2+n}+\tau_2^{k+2+n})={1 \over \sqrt{5}}\tau_1^{n+1}-{1 \over \sqrt{5}}\tau_2^{n+1}=F_{n+1},$$
we have $J(2)^{-1}J(0)^2{\mathbf L}=J(0){\mathbf F}$, which is an invariant sequence of the second
kind by Theorem \ref{transform} (b), as stated in Example \ref{fibomat}.

It directly follows from Theorem \ref{transform} (d) that $(J(0)+J(-1)){\mathbf F}$,
namely $\,\mathbf L$, is an invariant sequence of the first kind.
Since for each $i$ and $j$ with $i, j=0, 1, 2,\ldots$,
$$-J(-2)^{-1}=\left \{ \begin{array}{lr}
({1 \over 2})^{j-i+1}, &  {\rm if}~ i \leq j, \\
 0, &  {\rm if}~ i>j,
\end{array} \right.$$
we obtain $\left(-J(-2)^{-1}\right)^T [0,{\mathbf L}^T]^T=\mathbf F$, which
is an inverse invariant sequence of the first kind by Theorem \ref{transform} (c),
because for $n=0, 1, 2, \ldots$,
$$\sum_{k=0}^{n-1}(1/2)^{n-k}L_k=(1/2)^{n}~\sum_{k=0}^{n-1}((2\tau_1)^{k}+(2\tau_2)^{k})
={1 \over \sqrt{5}}\tau^{n}-{1 \over \sqrt{5}}\tau_2^{n}=F_n.$$

The sequence $\mathbf B=(B_0, B_1, \ldots)^T$ defined by
$B_0=1$ and $\sum_{k=0}^{n} {{n+1} \choose k}B_k=0~(n \geq 1)$ comprises the Bernoulli numbers and
$D \mathbf B$ is an invariant sequence of the first kind  \cite{Sun},
which also follows directly from the fact that $PDD \mathbf B=D \mathbf B$.
A new inverse invariant sequence of the first kind from the Bernoulli numbers $\mathbf B$ is
provided next. See Table $1$ for explicit members of these sequences.
\begin{cor}
	Let $\mathbf B=(B_0, B_1, \ldots)^T$ be the Bernoulli numbers.
	Then the sequence $\mathbf K=(K_0, K_1, K_2, \ldots)^T$ defined by
	\begin{equation*}
	K_0=0,~ K_n=\sum_{k=0}^{n-1}(1/2)^{n-k}(-1)^kB_k \;~(n=1, 2, \ldots)
	\end{equation*}
	is an inverse invariant sequence of the first kind.
\end{cor}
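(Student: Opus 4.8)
The plan is to realize $\mathbf{K}$ as the image of the known invariant sequence $D\mathbf{B}$ under the transformation supplied by Theorem \ref{transform} (c). Recall from the discussion preceding the statement that $D\mathbf{B}$ is an invariant sequence of the first kind, since $PD(D\mathbf{B})=D\mathbf{B}$. Hence Theorem \ref{transform} (c), applied with $\mathbf{x}=D\mathbf{B}$, immediately guarantees that $\left(-J(0)J(-2)^{-1}\right)^T D\mathbf{B}$ is an inverse invariant sequence of the first kind, so it remains only to check that this vector is precisely $\mathbf{K}$.

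First I would pin down the entries of the operator. Using the explicit form of $-J(-2)^{-1}$ recorded just before this corollary, whose $(i,j)$ entry is $(1/2)^{j-i+1}$ for $i\leq j$ and $0$ otherwise, together with the fact that $J(0)^T$ acts on a vector by the downward shift $J(0)^T\mathbf{v}=[0,\mathbf{v}^T]^T$, one computes that $\left(-J(0)J(-2)^{-1}\right)^T=\left(-J(-2)^{-1}\right)^T J(0)^T$ is strictly lower triangular, with $(i,j)$ entry $(1/2)^{i-j}$ for $i>j$ and $0$ for $i\leq j$. Since the $k$th entry of $D\mathbf{B}$ is $(-1)^kB_k$, applying this matrix to $D\mathbf{B}$ yields, in its $n$th component,
\[
\sum_{k=0}^{n-1}(1/2)^{n-k}(-1)^kB_k,
\]
which is exactly $K_n$ for $n\geq 1$, while the empty sum gives $K_0=0$. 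Therefore $\mathbf{K}=\left(-J(0)J(-2)^{-1}\right)^T D\mathbf{B}$ and the conclusion follows.

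The only delicate point will be the matrix bookkeeping in the second step: one must correctly transpose and compose the two Jordan-block factors to produce the strictly lower triangular geometric pattern, and then recognize the resulting convolution as the defining sum of $K_n$. This mirrors exactly the computation carried out in the text for $\left(-J(-2)^{-1}\right)^T[0,\mathbf{L}^T]^T=\mathbf{F}$, with $D\mathbf{B}$ now playing the role that the invariant sequence $\mathbf{L}$ played there, so no genuinely new difficulty arises. A self-contained alternative would be to verify directly that $PD\,\mathbf{K}=-\mathbf{K}$ (equivalently $\mathbf{K}\in\mathbf{E}_{-1}(PD)$ via Observation \ref{obs1}), but this is less transparent, and the route through Theorem \ref{transform} (c) is the natural one given the preceding development.
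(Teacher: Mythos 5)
Your proposal is correct and follows essentially the same route as the paper: both apply Theorem \ref{transform} (c) to the known invariant sequence $D\mathbf{B}$ and then identify $\left(-J(0)J(-2)^{-1}\right)^T D\mathbf{B}=(-J(-2)^{-1})^T[0,(D\mathbf{B})^T]^T$ with $\mathbf{K}$. Your version merely spells out the entrywise bookkeeping (the strictly lower triangular $(1/2)^{i-j}$ pattern) that the paper leaves implicit.
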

\begin{proof}
	It follows from Theorem \ref{transform} (c) that
	$$ (-J(-2)^{-1})^TJ(0)^TD \mathbf B=(-J(-2)^{-1})^T[0,D{\mathbf B}^T]^T $$
	is an inverse invariant sequence of the first kind.
	Notice now that the sequence  ${\mathbf K}$ in the statement is indeed equal to
	$(-J(-2)^{-1})^T[0,D{\mathbf B}^T]^T. $
\end{proof}

%By Theorem \ref{transform} (d), we can easily get $D \mathbf B=\left(J(0)-J(0)J(-1)^T\right){\mathbf K}$.
%It is clear that the sequence $\mathbf P=(1, 1/2, 1/4, \ldots)^T$ is an invariant sequence of the first kind  \cite{Sun}.
%By Theorem $3.8$ (c), the sequence
%$$ \left(-J(-2)^{-1}\right)^T [0,{\mathbf P}^T]^T=(0, 0, (1/2)^2, 2(1/2)^3, \ldots)^T$$
%is an inverse invariant sequence of the first kind.

\begin{table}[!ht] %\vspace*{-3ex}
\caption[]{\footnotesize (Inverse) invariant sequences of the first kind associated with the Bernoulli numbers.} \label{mytable}
\begin{tabular}{cccccccccccccccccc}
\hline
n&0&1&2&3&4&5&6&7&8&9&10&11&12&$\cdots$\\
\hline
$\mathbf B$&1&$-{1\over2}$&${1\over6}$&0&$-{1\over 30}$&0&${1\over 42}$&0&$-{1\over 30}$&0&${5\over 66}$&0&$-{691 \over 2730}$&$\cdots$\\
\hline
$D\mathbf B$&1&${1\over2}$&${1\over6}$&0&$-{1\over 30}$&0&${1\over 42}$&0&$-{1\over 30}$&0&${5\over 66}$&0&$-{691 \over 2730}$&$\cdots$\\
\hline
$\mathbf K$&0&${1 \over 2}$&${1\over2}$&${1 \over 3}$&${1 \over 6}$&${1 \over 15}$&${1 \over 30}$&${1 \over 35}$&${1 \over 70}$&$-{1 \over 105}$&$-{1 \over 210}$&${41 \over 1155}$&${41 \over 2310}$&$\cdots$\\
\hline
\end{tabular}
\end{table}

%%%%%%%%%%%%%%%%%%%%%%%%%%%%%%%%%%%%%%%%%%%%%%%%%%%%%%%%%%%%%%%%%%%%%

By Theorem \ref{transform} (d), we get $D \mathbf B=(J(0)+J(-1)){\mathbf K}$, since the first component of $(J(0)+J(-1)){\mathbf K}$
is clearly $(-1)^0B_0$, and for $i=1, 2, \ldots$, $i$th component of $(J(0)+J(-1)){\mathbf K}$ is
\begin{equation*}
-\sum_{k=0}^{i-1}(1/2)^{i-k}(-1)^kB_k+2\sum_{k=0}^{i}(1/2)^{i+1-k}(-1)^kB_k=(-1)^iB_i.
\end{equation*}
By Corollary \ref{columns} and Theorem \ref{transform}, we can directly get more (inverse) invariant sequences of the
first and second kind as follows:
\begin{cor}
	\label{newcor1}
	For a positive integer $n$, let
	$\mathbf \Phi_n={\mathcal S}_{l_n}{\mathcal S}_{l_{n-1}} \ldots {\mathcal S}_{l_3}S_{l_2}{\mathcal S}_{l_1}$,
	where
	\begin{equation*}
	{\mathcal S}_{l_i}=\begin{cases}
	P^{T\downarrow},&\text{if $l_i=1$},\\
	J(1)^T+J(0)^T, & \text{if $l_i=-1$},
	\end{cases}
	\end{equation*}
	and let $\tilde{\mathbf \Phi}_n=\tilde{\mathcal S}_{l_n}\tilde{\mathcal S}_{l_{n-1}} \ldots \tilde{\mathcal S}_{l_3}\tilde{\mathcal S}_{l_2}\tilde{\mathcal S}_{l_1}$ where
	\begin{equation*}
	\tilde{\mathcal S}_{l_i}=\begin{cases}
	J(2)^{-1}J(0),&\text{if $l_i=1$},\\
	Q^{T\downarrow}(0|0), & \text{if $l_i=-1$}
	\end{cases}
	\end{equation*}
	for $i=1, 2, \ldots, n$. Then for $\mathbf x \in \mathbb{R}^\infty$, we have the following:
	\begin{itemize}
		\item [\rm (a)] If $n$ is odd (even) and $l_i=(-1)^{i+1}$ for $i=1, 2, \ldots, n$, then $\mathbf \Phi_n{\mathbf x}$
		is an invariant (inverse invariant) sequence of the second kind.
		\item [\rm (b)] If $n$ is odd (even) and $l_i=(-1)^i$ for $i=1, 2, \ldots, n$, then $\tilde{\mathbf \Phi}_n{\mathbf x}$
		is an inverse invariant (invariant) sequence of the second kind.
	\end{itemize}
\end{cor}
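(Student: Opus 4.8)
The plan is to establish both clauses by a straightforward induction on the number of factors in the composition, invoking Corollary \ref{columns} to deal with the factors $P^{T\downarrow}$ and $Q^{T\downarrow}(0|0)$ and Theorem \ref{transform} to deal with the Jordan-block factors. The crucial observation is that Corollary \ref{columns}(a) guarantees that $\mathcal{S}_1 = P^{T\downarrow}$ sends \emph{every} vector to an invariant sequence of the second kind, while Corollary \ref{columns}(b) guarantees that $\tilde{\mathcal{S}}_{-1} = Q^{T\downarrow}(0|0)$ sends every vector to an inverse invariant sequence of the second kind. Hence these two factors fix the type of their output irrespective of their input, which in particular shows that the compositions $\mathbf{\Phi}_n$ and $\tilde{\mathbf{\Phi}}_n$ produce valid (inverse) invariant sequences for arbitrary $\mathbf{x} \in \mathbb{R}^\infty$.

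For clause (a), take $l_i = (-1)^{i+1}$, so that $l_i = 1$ exactly when $i$ is odd and $l_i = -1$ exactly when $i$ is even. Setting $\mathbf{y}_k = \mathcal{S}_{l_k}\cdots\mathcal{S}_{l_1}\mathbf{x}$, I would prove by induction on $k$ that $\mathbf{y}_k$ is an invariant sequence of the second kind when $k$ is odd and an inverse invariant sequence of the second kind when $k$ is even. The base case $k=1$ is immediate from $\mathbf{y}_1 = P^{T\downarrow}\mathbf{x}$ and Corollary \ref{columns}(a). For the inductive step, if $k$ is odd then $l_k = 1$, so $\mathbf{y}_k = P^{T\downarrow}\mathbf{y}_{k-1}$ is invariant of the second kind by Corollary \ref{columns}(a), whatever the type of $\mathbf{y}_{k-1}$; if $k$ is even then $l_k = -1$, so $\mathbf{y}_k = (J(1)+J(0))^T\mathbf{y}_{k-1}$, and since $\mathbf{y}_{k-1}$ is invariant of the second kind by the inductive hypothesis, Theorem \ref{transform}(a) makes $\mathbf{y}_k$ inverse invariant of the second kind. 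Evaluating at $k=n$ yields clause (a).

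Clause (b) is proved in the same way with the two roles interchanged. Here one takes $l_i = (-1)^i$, so that $l_i = -1$ exactly when $i$ is odd and $l_i = 1$ exactly when $i$ is even. The analogous induction shows that $\tilde{\mathcal{S}}_{l_k}\cdots\tilde{\mathcal{S}}_{l_1}\mathbf{x}$ is inverse invariant of the second kind for odd $k$, with the base case supplied by Corollary \ref{columns}(b) via $\tilde{\mathcal{S}}_{-1} = Q^{T\downarrow}(0|0)$, and invariant of the second kind for even $k$, the step using Theorem \ref{transform}(b) applied to $\tilde{\mathcal{S}}_1 = J(2)^{-1}J(0)$ acting on an inverse invariant input.

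I do not anticipate a genuine obstacle; the argument is really bookkeeping of the alternating types. The single point that must be verified with care is that every \emph{flipping} factor---namely $\mathcal{S}_{-1}$ in clause (a) and $\tilde{\mathcal{S}}_1$ in clause (b)---is fed an input of precisely the type demanded by Theorem \ref{transform}. This is exactly what the parity prescriptions $l_i = (-1)^{i+1}$ and $l_i = (-1)^i$ secure, since the immediately preceding factor is always one of the type-fixing operators $P^{T\downarrow}$ or $Q^{T\downarrow}(0|0)$, producing precisely the required input.
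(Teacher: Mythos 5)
Your proof is correct and follows exactly the route the paper intends: the paper states Corollary~\ref{newcor1} as an immediate consequence of Corollary~\ref{columns} and Theorem~\ref{transform}, and your induction is just the explicit bookkeeping of that argument, with $P^{T\downarrow}$ and $Q^{T\downarrow}(0|0)$ acting as type-fixing operators (via Corollary~\ref{columns}(a),(b)) and $(J(1)+J(0))^T$ and $J(2)^{-1}J(0)$ as type-flipping operators (via Theorem~\ref{transform}(a),(b)). Nothing essential differs.
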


\begin{ex}
{\rm
	It follows from (\ref{fibo}) and Corollary \ref{newcor1} (a) that
	for $\mathbf x=[x_0, x_1, x_2, \ldots]^T \in \mathbb{R}^\infty$, $\mathbf \Phi_2 \mathbf x=\mathbf y$
	is an inverse invariant sequence of the second kind, where $\mathbf y=[y_0, y_1, y_2, \ldots]^T$ with $$y_i=\sum_{t=\lfloor {i \over 2}\rfloor}^{i+1}\left({\binom{t}{i-1-t}}+{\binom{t}{i+1-t}}\right)x_t \;\;\; (i=0, 1, \ldots). $$
	For example, let $\mathbf x=[0, 0, 0, 0, 0, 0, 0, 1, 0, 0, \ldots]^T$.
	This results to the inverse invariant sequence of the second kind $\mathbf y=[y_0, y_1, y_2, \ldots]^T$,
	where
	$$ y_i={\binom{7}{i-1-7}}+{\binom{7}{i+1-7}} \;\;\; (i=0, 1, 2, \ldots). $$
	That is, $y_0=\cdots=y_5=0$, $y_{16}=y_{17}=\cdots=0$, and by direct calculation, one can compute the nonzero components $y_6, y_7, \ldots, y_{15}$;
	e.g., $y_{10}={\binom{7}{10-1-7}}+{\binom{7}{10+1-7}}=56$.
		
}	
\end{ex}

\begin{cor}
\label{newcor2}
	For a positive integer $n$, let $\mathbf \Psi_n={\mathcal T}_{l_n}{\mathcal T}_{l_{n-1}} \ldots {\mathcal T}_{l_3}{\mathcal T}_{l_2}{\mathcal T}_{l_1}$,
	where
	\begin{equation*}
	{\mathcal T}_{l_i}=\begin{cases}
	Q^{\downarrow},&\text{if $l_i=1$},\\
	\left(-J(-2)^{-1}\right)^TJ(0)^T, & \text{if $l_i=-1$}
	\end{cases}
	\end{equation*}
	and let $\tilde{\mathbf \Psi}_n=\tilde{\mathcal T}_{l_n}\tilde{\mathcal T}_{l_{n-1}} \ldots \tilde{\mathcal T}_{l_3}\tilde{\mathcal T}_{l_2}\tilde{\mathcal T}_{l_1}$, where
	\begin{equation*}
	\tilde{\mathcal T}_{l_i}=\begin{cases}
	J(0)+J(-1),&\text{if $l_i=1$},\\
	\left[\begin{array}{c}
	{\mathbf 0}^T\\
	{P\hspace{0.01cm}^\downarrow}
	\end{array} \right], & \text{if $l_i=-1$}
	\end{cases}
	\end{equation*}
	for $i=1, 2, \ldots, n$. Then for $\mathbf x \in \mathbb{R}^\infty$, we have the following:
	\begin{itemize}
		\item [\rm (a)] If $n$ is odd (even) and $l_i=(-1)^{i+1}$ for $i=1, 2, \ldots, n$,
		then $\mathbf \Psi_n{\mathbf x}$ is an invariant (inverse invariant) sequence of the first kind.
		\item [\rm (b)] If $n$ is odd (even) and $l_i=(-1)^i$ for $i=1, 2, \ldots, n$,
		then $\tilde{\mathbf \Psi}_n{\mathbf x}$ is an inverse invariant (invariant) sequence of the first kind.
	\end{itemize}
\end{cor}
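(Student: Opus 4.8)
The plan is to treat this as a parity-bookkeeping induction on $n$, resting entirely on Corollary \ref{columns} and Theorem \ref{transform}. The building blocks fall into two types according to their action on first-kind sequences. By Corollary \ref{columns}(d) resp.\ (c), the column spaces of $Q^{\downarrow}$ and of the operator $\tilde{\mathcal T}_{-1}$ equal $\mathbf{E}_1(PD)$ resp.\ $\mathbf{E}_{-1}(PD)$; hence these two \emph{range} operators map \emph{any} input vector into the eigenspace of invariant resp.\ inverse invariant sequences of the first kind, irrespective of the input's type. By contrast, Theorem \ref{transform}(c) resp.\ (d) shows that the \emph{toggle} operators $\mathcal{T}_{-1}$ and $\tilde{\mathcal T}_1$ convert an invariant sequence of the first kind into an inverse invariant one resp.\ perform the reverse conversion. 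As a preliminary I would record the transpose identity $\left(-J(0)J(-2)^{-1}\right)^T=\left(-J(-2)^{-1}\right)^TJ(0)^T$, which confirms that the operator $\mathcal{T}_{-1}$ of the statement is literally the one appearing in Theorem \ref{transform}(c).

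For part (a) the hypothesis $l_i=(-1)^{i+1}$ means $l_i=1$ at odd positions $i$ and $l_i=-1$ at even positions, so $\mathcal{T}_{l_i}=Q^{\downarrow}$ (a range operator) at odd $i$ and $\mathcal{T}_{l_i}=\mathcal{T}_{-1}$ (a toggle) at even $i$. Reading $\mathbf{\Psi}_n\mathbf{x}=\mathcal{T}_{l_n}\cdots\mathcal{T}_{l_1}\mathbf{x}$ from the right, I would prove by induction on $n$ that $\mathbf{\Psi}_n\mathbf{x}$ is invariant of the first kind for odd $n$ and inverse invariant of the first kind for even $n$. The base case $n=1$ is immediate since $\mathcal{T}_{l_1}=Q^{\downarrow}$ lands in $\mathbf{E}_1(PD)$ by Corollary \ref{columns}(d). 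For the inductive step, if $n$ is odd the outermost factor is the range operator $Q^{\downarrow}$, which forces $\mathbf{\Psi}_n\mathbf{x}\in\mathbf{E}_1(PD)$ no matter what $\mathbf{\Psi}_{n-1}\mathbf{x}$ is; if $n$ is even the outermost factor is $\mathcal{T}_{-1}$, and since by the inductive hypothesis $\mathbf{\Psi}_{n-1}\mathbf{x}$ is invariant of the first kind (as $n-1$ is odd), Theorem \ref{transform}(c) gives that $\mathbf{\Psi}_n\mathbf{x}$ is inverse invariant.

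Part (b) is entirely parallel. Here $l_i=(-1)^i$ places the range operator $\tilde{\mathcal T}_{-1}$ at odd positions and the toggle $\tilde{\mathcal T}_1=J(0)+J(-1)$ at even positions. The same induction—using Corollary \ref{columns}(c) both for the base case and to reset the type at odd positions, and Theorem \ref{transform}(d) for the conversion at even positions—shows that $\tilde{\mathbf{\Psi}}_n\mathbf{x}$ is inverse invariant of the first kind for odd $n$ and invariant of the first kind for even $n$.

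I do not expect a substantive obstacle, since the argument is a direct parity count built on results already established. The only points requiring care are (i) verifying, through the transpose identity above, that $\mathcal{T}_{-1}$ and $\tilde{\mathcal T}_1$ coincide with the operators of Theorem \ref{transform}(c),(d), and (ii) matching the sign conventions so that under each stated pattern the range operators always sit in the positions that reset the type—thereby guaranteeing that the final type of $\mathbf{\Psi}_n\mathbf{x}$ and $\tilde{\mathbf{\Psi}}_n\mathbf{x}$ depends solely on the parity of $n$.
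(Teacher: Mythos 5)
Your proposal is correct and matches the paper's intended argument: the paper states this corollary without a separate proof, noting only that it follows ``directly'' from Corollary \ref{columns} and Theorem \ref{transform}, which is exactly the parity induction you carry out (range operators $Q^{\downarrow}$ and $\left[\begin{smallmatrix}{\mathbf 0}^T\\ P^{\downarrow}\end{smallmatrix}\right]$ resetting the type via Corollary \ref{columns}(d),(c), and the toggles $\left(-J(-2)^{-1}\right)^TJ(0)^T$ and $J(0)+J(-1)$ converting it via Theorem \ref{transform}(c),(d)). Your verification of the transpose identity $\left(-J(0)J(-2)^{-1}\right)^T=\left(-J(-2)^{-1}\right)^TJ(0)^T$ is a worthwhile detail the paper leaves implicit.
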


%For example, it is clear from the binomial theorem \cite{Brualdi} that the sequence $\mathbf P=(1, 1/2, (1/2)^2, \ldots)^T$ is an $1$-invariant
%sequence of the first kind  \cite{Sun}. By Theorem \ref{transform} (c), we get that the sequence
%\begin{equation*}
%\left(-J(-2)^{-1}\right)^TJ(0)^T {\mathbf P}=(0, 1/2, 2(1/2)^2, 3(1/2)^3, \ldots)^T
%\end{equation*}
%is an inverse invariant sequence of the first kind. Again, from Corollary \ref{newcor1} (b),
%we get that the sequence
%\begin{equation*}
%(J(0)+J(-1))\left(-J(-2)^{-1}\right)^TJ(0)^T {\mathbf P}=(1, 1/2, (1/2)^2, (1/2)^3, \ldots)^T,
%\end{equation*}
%is an invariant sequence of the first kind.
%%%%%%%%%%%%%%%%%%%%%%%%%%%%%%%%%%%%%%%%%%%%%%%%%%%%%%%%%%%%%%%%%%%%%%%%%%%%%%%
%\newpage

\begin{ex}
	{\rm
		
		For $i=1, 2, \ldots$, the $i$th row of $\left[\begin{array}{c}
		{\mathbf 0}^T\\
		{P\hspace{0.01cm}^\downarrow}
		\end{array} \right]$ is	
		$[{\binom{i-1}{0}},{\binom{i-2}{1}}, \ldots, {{\lceil {{i-1}\over 2}\rceil}\choose
			{\lfloor {{i-1}\over 2}\rfloor}}, 0, 0, \ldots].$
		It follows that
		for $\mathbf x=[x_0, x_1, x_2, \ldots]^T \in \mathbb{R}^\infty$, $\tilde{\mathbf \Psi}_2 \mathbf x=\mathbf y$
		is an invariant sequence of the first kind, where  by Corollary \ref{newcor2} (b), $\mathbf y=(y_0, y_1, y_2, \ldots)^T$ satisfies
		$$y_i=\sum_{t=0}^{\lfloor {i \over 2}\rfloor}\left({\binom{i-1-t}{t-1}}+{\binom{i-t}{t}}\right)x_t.$$
		For example, let $\mathbf x=[0, 0, 0, 0, 0, 0, 0, 1, 0, 0, \ldots]^T$. This results to the invariant sequence of
		the first kind $\mathbf y=[y_0, y_1, y_2, \ldots]^T$, where
		$$y_i={\binom{i-1-7}{6}}+{\binom{i-7}{7}} \;\;\;(i=1, 2, \ldots).$$
		Thus $y_0=y_1=\cdots=y_{13}=0$ and one can find by direct calculation, the nonzero components $y_{14}, y_{15}, \ldots$;  e.g., $y_{14}={\binom{6}{6}}+{\binom{7}{7}}=2$.
	}	
\end{ex}

In the next two theorems, we obtain direct relationships among (inverse) invariant sequences
of the first kind and (inverse) invariant sequences of the second kind.

\begin{thm}
\label{new2}
Let $\mathbf{x}$ and $\mathbf{y}$ be, respectively, either
\\
{\small
{\rm (i)} an invariant sequence of the first kind and an inverse invariant
sequence of the second kind,
\\
\hspace*{.25in} or
\\
{\rm (ii)} an inverse invariant sequence of the first kind and an invariant
sequence of the second kind.}\\
Then $\mathbf{x}^TD\mathbf{y}=0$.
\end{thm}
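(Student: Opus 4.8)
The plan is to translate the statement into the eigenvector language of Observation~\ref{obs1} and then exploit a weighted orthogonality between eigenvectors of $PD$ and $P^TD$ belonging to distinct eigenvalues. In case (i), $\mathbf{x}\in\mathbf{E}_1(PD)$ and $\mathbf{y}\in\mathbf{E}_{-1}(P^TD)$; in case (ii), $\mathbf{x}\in\mathbf{E}_{-1}(PD)$ and $\mathbf{y}\in\mathbf{E}_1(P^TD)$. Thus in both cases $PD\mathbf{x}=\lambda\mathbf{x}$ and $P^TD\mathbf{y}=\mu\mathbf{y}$ with $\{\lambda,\mu\}=\{1,-1\}$, so that $\lambda\neq\mu$. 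The desired conclusion $\mathbf{x}^TD\mathbf{y}=0$ then becomes a single claim covering both cases uniformly, and I would prove it in this merged form rather than treating (i) and (ii) separately.

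The key computation uses only $D^T=D$ (so that $(PD)^T=DP^T$) together with the two eigen-relations. Writing $s=\mathbf{x}^TD\mathbf{y}$, I would compute
\begin{equation*}
\lambda s=(PD\mathbf{x})^TD\mathbf{y}=\mathbf{x}^T(DP^T)D\mathbf{y}=\mathbf{x}^TD(P^TD\mathbf{y})=\mu s,
\end{equation*}
whence $(\lambda-\mu)s=0$ and, since $\lambda\neq\mu$, we get $s=\mathbf{x}^TD\mathbf{y}=0$. Equivalently, one may observe that multiplying $P^TD\mathbf{y}=\mu\mathbf{y}$ on the left by $D$ gives $(PD)^T(D\mathbf{y})=\mu(D\mathbf{y})$, so $D\mathbf{y}$ is a $\mu$-eigenvector of $(PD)^T$, i.e.\ a left eigenvector of $PD$; the identity $\mathbf{x}^TD\mathbf{y}=0$ is then precisely the classical orthogonality of left and right eigenvectors of $PD$ belonging to the distinct eigenvalues $\mu$ and $\lambda$. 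I would mention this conceptual framing but carry out the three-line computation as the primary argument.

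The step that requires care, and the one I expect to be the main obstacle, is the middle equality $(PD\mathbf{x})^TD\mathbf{y}=\mathbf{x}^TD(P^TD\mathbf{y})$, which amounts to an interchange of the order of summation in the double series $\sum_{i\ge j}\binom{i}{j}(-1)^{i+j}x_j y_i$. Because $PD$ is lower triangular, each entry of $PD\mathbf{x}$ is a finite sum, whereas $P^TD\mathbf{y}$ is defined through the infinite sums $\sum_{i\ge j}\binom{i}{j}(-1)^i y_i$ that converge exactly because $\mathbf{y}$ is an invariant (or inverse invariant) sequence of the second kind. I would justify the reindexing by noting that these double series are absolutely convergent for the sequences at hand---implicit in the well-definedness of the weighted inner product $\mathbf{x}^TD\mathbf{y}$---so that the Fubini-type interchange and hence the associativity of the matrix--vector products are legitimate. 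Once this bookkeeping is settled, the displayed computation disposes of both cases at once.
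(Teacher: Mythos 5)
Your proposal is correct and is essentially the paper's own proof: both arguments translate the hypotheses into the eigen-relations $PD\mathbf{x}=\lambda\mathbf{x}$, $P^TD\mathbf{y}=\mu\mathbf{y}$ with $\lambda\neq\mu$, then evaluate $\mathbf{x}^TDP^TD\mathbf{y}$ in two ways using $(PD)^T=DP^T$ to get $(\lambda-\mu)\,\mathbf{x}^TD\mathbf{y}=0$ (the paper writes this with $\mu=-\lambda$ as $2\lambda\,\mathbf{x}^TD\mathbf{y}=0$). Your extra paragraph on the convergence and associativity of the infinite sums addresses a point the paper passes over in silence; it is a welcome caveat but does not constitute a different method.
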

\begin{proof}
From $PD\mathbf{x}=\lambda\mathbf{x}$ and $P^TD\mathbf{y}=-\lambda\mathbf{y}$, we get $\mathbf{x}^TDP^TD\mathbf{y}=\lambda\mathbf{x}^TD\mathbf{y}$, which
implies that $2\lambda\mathbf{x}^TD\mathbf{y}=0$ for $\lambda \in \{ 1, -1\}$.
\end{proof}
The following lemma is a useful tool for proving the final theorem.
\begin{lem}
\label{new3}
For every positive integer $n$,
\begin{itemize}
  \item [\rm (a)] the columns of $(P+D)^n$ are invariant sequences of the first kind, and the columns of $(P-D)^n$ are
              inverse invariant sequences of the first kind.
  \item [\rm (b)] the columns of $(P^T+D)^n$ are invariant sequences of the second kind, and the columns of $(P^T-D)^n$ are
  inverse invariant sequences of the second kind.
  \end{itemize}
\end{lem}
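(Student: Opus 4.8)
The plan is to recast the statement entirely in terms of the eigenvector characterizations of Observation \ref{obs1}. Since the $j$th column of a matrix $A$ is $A\mathbf{e}_j$, the assertion ``every column of $A$ is an invariant sequence of the first kind'' is equivalent to $PDA=A$, and ``every column of $A$ is an inverse invariant sequence of the first kind'' is equivalent to $PDA=-A$; the analogues for the second kind replace $PD$ by $P^TD$. Thus part (a) reduces to proving the two matrix identities
\[
PD(P+D)^n = (P+D)^n, \qquad PD(P-D)^n = -(P-D)^n,
\]
and part (b) to the same identities with $P$ and $PD$ replaced throughout by $P^T$ and $P^TD$.

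The engine driving everything is a single algebraic identity. Recall from the discussion preceding Lemma \ref{bf2proof} that $P^{-1}=DPD$, so that $PD$ is involutory, $(PD)^2=I$. Writing this as $PDPD=I$ and multiplying on the right by $D$ (using $D^2=I$) yields the key identity $PDP=D$; transposing and using that $D$ is symmetric gives $P^TDP^T=D$ as well. I would first record these two identities. I would also note that all products appearing below are unambiguous: $P$ and $P\pm D$ are lower triangular, while $P^T$ and $P^T\pm D$ are upper triangular, and in each part only factors of a single triangular type are multiplied together, so every matrix entry is computed by a finite sum and associativity holds with no analytic concerns.

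With $PDP=D$ in hand, the base case is immediate, since $PD^2=P$:
\[
PD(P+D) = PDP + PD^2 = D + P = P+D,
\qquad
PD(P-D) = PDP - PD^2 = D - P = -(P-D).
\]
I would then peel off one factor and induct on $n$: assuming the result for $(P\pm D)^{n-1}$,
\[
PD(P\pm D)^n = \bigl(PD(P\pm D)\bigr)(P\pm D)^{n-1} = \pm (P\pm D)(P\pm D)^{n-1} = \pm(P\pm D)^n.
\]
This establishes (a), and repeating the argument verbatim with $P^TDP^T=D$ in place of $PDP=D$ establishes (b). Finally I would translate back through Observation \ref{obs1}: the identity $PD(P+D)^n=(P+D)^n$ says that each column of $(P+D)^n$ lies in $\mathbf{E}_1(PD)$, hence is an invariant sequence of the first kind, and the remaining three cases follow identically.

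There is essentially no hard step here beyond spotting the identity $PDP=D$; once it is available the proof collapses to a one-line induction. The only point demanding a word of care is the interpretation of the infinite matrix products, and this is dispatched by the remark that within each part all factors are triangular of a consistent type, so each entry is a finite sum and the usual matrix algebra applies.
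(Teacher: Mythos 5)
Your proof is correct and follows essentially the same route as the paper's: both peel off one factor of $(P\pm D)$ (resp.\ $(P^T\pm D)$), use the involutory property of $PD$ and $P^TD$ (equivalently $PDP=D$ and $P^TDP^T=D$), and conclude via the eigenvector characterization of Observation \ref{obs1}. You merely make explicit what the paper leaves implicit---the key identity $PDP=D$, the base-case computation, and the finiteness of entry sums for products of triangular infinite matrices---so no substantive difference exists between the two arguments.
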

\begin{proof}
Let $n$ be a positive integer. Then $$PD(P+D)^n=PD(P+D)(P+D)^{n-1}=(P+D)^n$$ and $$P^TD(P^T-D)^n=P^TD(P^T-D)(P^T-D)^{n-1}=-(P^T-D)^n,$$
which respectively imply that each column of $(P+D)^n$ is an invariant sequence of the first kind,
and each column of $(P^T-D)^n$ is an inverse invariant sequence of the second kind. The other assertions of the theorem
follow similarly.
\end{proof}
\vskip 0.5mm
The following theorem is a form of converse of Theorem \ref{new2}.
\begin{thm}
\label{new4}
Let ${\mathbf x_i}\in \mathbb{R}^\infty~(i=0, 1, 2, \ldots)$,  ${\mathbf y} \in \mathbb{R}^\infty\setminus\{{\mathbf 0}\}$,
and let $X=[\mathbf x_0, \mathbf x_1, \mathbf x_2, \ldots]$. Then for each $i=0, 1, 2, \ldots$,
\begin{itemize}
  \item [\rm (a)] if ${\mathbf x_i}^TD{\mathbf y}=0$ and $X=P+D~(X=P-D)$, then ${\mathbf y}$ is an inverse invariant (invariant)
  sequence of the second kind, and ${\mathbf x_i}$ is an invariant (inverse invariant) sequence of the first kind.
  \item [\rm (b)] if ${\mathbf x_i}^TD{\mathbf y}=0$ and $X=P^T+D~(X=P^T-D)$, then ${\mathbf y}$ is an inverse invariant (invariant) sequence
  of the first kind and ${\mathbf x_i}$ is an inverse invariant (invariant) sequence of the second kind.
  \end{itemize}
\end{thm}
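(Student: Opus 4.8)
The plan is to split each clause into its two independent assertions --- the one about the columns $\mathbf{x}_i$ of $X$ and the one about $\mathbf{y}$ --- and to dispatch them by two different mechanisms. The assertions about the $\mathbf{x}_i$ are immediate from Lemma~\ref{new3} with $n=1$: since $X$ equals one of $P+D,\ P-D,\ P^T+D,\ P^T-D$, each $\mathbf{x}_i$ is a column of the corresponding matrix, and Lemma~\ref{new3} reads off its invariance type directly (for instance the columns of $P+D$ are invariant sequences of the first kind, and those of $P^T-D$ are inverse invariant sequences of the second kind). So no real work is needed there.

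The substance of the theorem is the conclusion about $\mathbf{y}$, and the key observation is that the hypothesis ``$\mathbf{x}_i^TD\mathbf{y}=0$ for every $i$'' is nothing but the single matrix identity $X^TD\mathbf{y}=\mathbf{0}$, because the $i$th entry of $X^TD\mathbf{y}$ is exactly $\mathbf{x}_i^TD\mathbf{y}$. First I would record the two elementary facts $D^T=D$ and $D^2=I$. Then, in case (a) with $X=P+D$, one has $X^T=P^T+D$, whence
\[
\mathbf{0}=X^TD\mathbf{y}=(P^T+D)D\mathbf{y}=P^TD\mathbf{y}+D^2\mathbf{y}=P^TD\mathbf{y}+\mathbf{y},
\]
so $P^TD\mathbf{y}=-\mathbf{y}$, which by Observation~\ref{obs1} says $\mathbf{y}$ is an inverse invariant sequence of the second kind. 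The remaining three situations are structurally identical: taking $X^T$ to be $P^T-D$, $P+D$, or $P-D$ and invoking $D^2=I$ turns $X^TD\mathbf{y}=\mathbf{0}$ into $P^TD\mathbf{y}=\mathbf{y}$, $PD\mathbf{y}=-\mathbf{y}$, or $PD\mathbf{y}=\mathbf{y}$, respectively, each of which names the asserted invariance type for $\mathbf{y}$ via Observation~\ref{obs1}.

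There is no deep obstacle here; the entire argument is driven by the involutory identity $D^2=I$ (together with $(PD)^2=(P^TD)^2=I$, which already underlies Lemma~\ref{new3}), and the only real labor is sign bookkeeping across the four cases. The one point that deserves a word of care is the passage from the coordinatewise hypothesis to the matrix equation $X^TD\mathbf{y}=\mathbf{0}$ in the infinite-dimensional setting: I would note that each entry $(X^TD\mathbf{y})_i=\sum_j X_{ji}(D\mathbf{y})_j$ is the very sum already implicit in the definition of the relevant invariant relation, so the manipulation introduces no new convergence concern. Finally, the hypothesis $\mathbf{y}\neq\mathbf{0}$ is not used in deriving the eigenvalue equation; it serves only to guarantee that $\mathbf{y}$ is a genuine, nontrivial member of the asserted class rather than the zero sequence.
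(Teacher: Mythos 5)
Your proposal is correct and is essentially the paper's own proof: the paper likewise converts the hypothesis into $X^TD\mathbf{y}=(P^T+D)D\mathbf{y}=\mathbf{0}$ to extract $P^TD\mathbf{y}=-\mathbf{y}$, invokes Lemma~\ref{new3} to classify the columns $\mathbf{x}_i$, and dismisses the remaining sign cases as similar, which you merely spell out. (Incidentally, your careful reading of Lemma~\ref{new3} for the $\mathbf{x}_i$ in part (b) --- columns of $P^T+D$ are \emph{invariant}, and those of $P^T-D$ \emph{inverse invariant}, sequences of the second kind --- shows that the parenthetical labels for $\mathbf{x}_i$ in the statement of (b) are swapped; this is a typo in the statement, not a flaw in your argument.)
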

\begin{proof}
If ${\mathbf x_i}^TD{\mathbf y}=0$ and $X=P+D$, then since $X^TD \mathbf{y}=(P^T+D)D\mathbf{y}=\mathbf 0$, we get $P^TD\mathbf{y}=-\mathbf{y}$ and by Lemma \ref{new3}, $PD \mathbf{x}_i=\mathbf{x}_i$. So ${\mathbf y}$ is an inverse invariant sequence of the second kind and ${\mathbf x_i}$ is an invariant sequence of the first kind.
This proves the first case of part (a). For the case of $X=P-D$ and part (b), the results can be shown similarly.
\end{proof}

%{\bf Acknowledgments}
%\vskip 0.3cm
%The first author wish to thank his hosts Michael J. Tsatsomeros and Judith J. McDonald, and the Department of Mathematics %and Statistics at Washington State University for supporting research and stay on his sabbatical leave.
\vskip 0.5cm
\section*{References}
\bibliography{mybibfile}

\end{document}